\newtheorem{theorem}{Theorem}[section]
\newtheorem{lemma}[theorem]{Lemma}
\newtheorem{proposition}[theorem]{Proposition}
\newtheorem{corollary}[theorem]{Corollary}
\theoremstyle{definition}
\newtheorem{definition}[theorem]{Definition}
\theoremstyle{remark}
\newtheorem{remark}[theorem]{Remark}
\numberwithin{equation}{section}
\newcommand{\thmref}[1]{Theorem~\ref{#1}}
\newcommand{\lemref}[1]{Lemma~\ref{#1}}
\newcommand{\propref}[1]{Proposition~\ref{#1}}
\newcommand{\defnref}[1]{Definition~\ref{#1}}
\newcommand{\remref}[1]{Remark~\ref{#1}}
\def\Z{{\bf{Z}}}
\def\N{{\bf{N}}}
\def\ra{\rightarrow}
\def\mk{\medskip}
\def\tfae{the following conditions are equivalent:}
\begin{document}
\setcounter{page}{1}

\title[Topological amenability]{Topological amenability is a Borel property}

\author[Jean Renault]{Jean Renault}

\address{Universit\'e d'Orl\'eans and CNRS  (MAPMO/UMR 7349 and FDP/FR2964), D\'epartment de Math\'ematiques, 
45067 Orl\'eans Cedex 2, France.}
\email{\textcolor[rgb]{0.00,0.00,0.84}{Jean.Renault@univ-orleans.fr}}



\subjclass{Primary 22A22; Secondary 22D25, 46L05}

\keywords{amenable groupoids, F\o lner sequences, subexponential growth, singly generated dynamical systems}


\begin{abstract}
We establish that a $\sigma$-compact locally compact groupoid possessing a continuous Haar system is topologically amenable if and only if it is Borel amenable. We give some examples and applications.
\end{abstract}

\maketitle

\section{Introduction}
The notion of topological amenability of a locally compact groupoid $G$ endowed with a Haar system was first introduced in \cite[Definition II.3.6]{ren:approach} as a convenient sufficient condition for measurewise amenability. Indeed, it implies both the equality of the reduced C*-algebra $C_r^*(G)$ and the full C*-algebra $C^*(G)$ of the groupoid and the nuclearity of $C^*(G)$. However, some later results have given a greater interest to this notion. When $G$ is an \'etale Hausdorff locally compact groupoid, one has a direct equivalence between the topological amenability of $G$ and the nuclearity of $C^*_r(G)$ (see \cite{ad:syst} in the case of discrete group actions and \cite[Theorem 5.6.18]{bo:fda} in the general \'etale case). Moreover, topological amenability has applications to the Baum-Connes conjecture: for example, J.-L. Tu shows in \cite{tu:amenable} that topologically amenable Hausdorff locally compact groupoids with Haar systems admit proper affine actions on Hilbert bundles, hence satisfy the Baum-Connes conjecture. The equivalence between topological amenability and measurewise amenability is established in \cite[Corollary 3.3.8]{adr:amenable} for a large class of groupoids, including \'etale groupoids. The proof given in \cite{adr:amenable} relies on unnecessary assumptions which obscure its main ideas. In particular, it misses a notion of Borel amenability analogous to the above notion of topological amenability. The adequate notion of Borel amenability appears explicitly shortly later in Section 2.4 of the comprehensive work \cite{jkl:relations} by S. Jackson, A. S. Kechris and A. Louveau about countable Borel equivalence relation. It turns out that for $\sigma$-compact locally compact groupoids with Haar systems, both notions coincide. Although this may be well-known to specialists, it seems useful to present here a general proof of this fact. On one hand, it gives a further justification to the early definition of topological amenability. On the other, it has practical applications since Borel amenability is easier to check than topological amenability; this will be illustrated by some examples in the second section. As in the case of groups, where it essentially amounts to the equivalence of amenability and Reiter's properties $(P_1)$ and $(P_1^*)$, the crux of the proof is a classical application of the Hahn-Banach theorem to the closure of a convex set. Our proof is modelled after the group case (see \cite[Theorem G.3.1]{bhv:T} for a recent exposition). The definition of topological amenability given below could be adapted to arbitrary topological groupoids. However, the proof of the equivalence makes an essential use of the existence of a continuous Haar system and of the locally compact topology of $G$. Moreover it is not clear how useful this notion and its Borel counterpart are for non locally compact groups.  The next section contains the definition of topological and Borel amenability and the main result, namely the equivalence of these notions for $\sigma$-compact locally compact groupoids endowed with a Haar system. The last section contains applications and examples which take advantage of the flexibility provided by the equivalence of both notions. 

We use the terminology and the notation of \cite{adr:amenable}. The unit space of a groupoid $G$ is denoted by $G^{(0)}$. The elements of $G$ are usually denoted by $\gamma, \gamma',\ldots$; those of $G^{(0)}$ are denoted by $x,y,\ldots$. The structure of $G$ is defined by the inclusion map $i: G^{(0)}\ra G$ (we shall identify $x$ and $i(x)$), the range and source maps $r,s:G\ra G^{(0)}$, the inverse map $\gamma\mapsto\gamma^{-1}$ from $G$ to $G$ and the multiplication map $(\gamma,\gamma')\mapsto \gamma\gamma'$ from the set of composable pairs 
$$G^{(2)}=\{(\gamma,\gamma')\in G\times G: s(\gamma)=r(\gamma')\}$$
to $G$. Given $A,B\subset G^{(0)}$, we write $G^A=r^{-1}(A)$, $G_B=s^{-1}(B)$ and $G^A_B=G^A\cap G_B$. Similarly, given $x,y\in G^{(0)}$, we write $G^x=r^{-1}(x)$, $G_y=s^{-1}(y)$ and $G(x)=G^x_x$.  A Borel [resp. topological] groupoid is a groupoid endowed with a compatible Borel [resp. topological] structure: $G$ and $G^{(0)}$ are Borel [resp. topological] spaces and the above maps are Borel [resp. continuous]. We need to be more precise in the definition of a topological groupoid: we assume that $G^{(0)}\subset G$ and $G^{(2)}\subset G\times G$ have the subspace topology. We also include in the definition of a topological groupoid the assumptions that the unit space is Hausdorff and that the range and source maps are open but we do not assume that $G$ is Hausdorff. Foliation theory, where the notion of amenability is preeminent, provides many examples of non-Hausdorff locally compact groupoids which should be covered by our discussion. With respect to amenability, non-Hausdorff groupoids do not present real difficulties but make the exposition more technical. It may help on a first reading to assume that groupoids are Hausdorff. The articles \cite{pat:gpd, ks:regular,tu:non-hausdorff,mw:equivalence} contain some of the technical tools needed in the non-Hausdorff case. As in \cite{mw:equivalence}, we do not include Hausdorffness in the definition of a compact space (our compact spaces are called quasi-compact in Bourbaki's terminology). By definition, a not necessarily Hausdorff locally compact space is a topological space such that every point admits a compact Hausdorff neighborhood. Equivalently, it is a topological space which admits a cover by locally compact Hausdorff open subsets. This second definition provides a convenient bridge from Hausdorff locally compact spaces to non-Hausdorff locally compact spaces. Given a locally compact Hausdorff open subset $U$ of a locally compact space $X$, $C_c(U)$ denotes the usual space of complex-valued continuous functions on $U$ which have compact support. When one extends by 0 outside $U$ a function $f\in C_c(U)$, the resulting extension $\tilde f$ is not necessarily continuous on $X$. Following A. Connes, ${\mathcal C}_c(X)$ denotes the linear span of these functions. We keep the usual definition of a Radon measure on $X$ as a linear functional on $\mathcal{C}_c(X)$ which is continuous for the inductive limit topology. As in the Hausdorff case, a Radon measure $\nu$ defines a complex so-called Borel Radon measure, still denoted by $\nu$, on the Borel subsets contained in compact subsets (see \cite{mw:equivalence}); moreover, a linear functional on ${\mathcal C}_c(X)$ which is positive on positive functions is a Radon measure. The definition of a $\sigma$-compact locally compact space $X$ is the usual one, namely there exists an increasing sequence $(K_n)$ of compact subsets such that $X=\bigcup K_n$. As in the Hausdorff case, second countable locally compact spaces are $\sigma$-compact. We shall also use some results  from \cite[Chapters 1 and 2]{adr:amenable} which were given for Hausdorff spaces and Hausdorff groupoids and which we will adapt to the non-Hausdorff case. 

\section{Borel versus topological amenability}

Let us first give our definitions of amenability for groupoids. The definition of Borel amenability  given below is exactly the definition of 1-amenability of \cite[Definition 2.12]{jkl:relations} in the case of countable Borel equivalence relations.

\begin{definition}\label{Borel amenable} A Borel groupoid $G$ is said to be {\it Borel amenable} if there exists a {\it Borel approximate invariant mean}, i.e. a sequence $(m_n)_{n\in\N}$, where each $m_n$ is a family $(m_n^x)_{x\in G^{(0)}}$ of finite positive measure $m_n^x$ of mass not greater than one on $G^x=r^{-1}(x)$ such that:
\begin{enumerate}
\item for all $n\in\N$, $m_n$ is Borel in the sense that for all bounded Borel functions $f$ on $G$, $x\mapsto\int f dm_n^x$ is Borel;
\item  $\|m_n^x\|_1\to 1$ for all $x\in G^{(0)}$;
\item $\|\gamma m_n^{s(\gamma)}-m_n^{r(\gamma)}\|_1\to 0$ for all $\gamma\in G$.
\end{enumerate}
\end{definition}

In the above definition as well as in the rest of the paper, $\|\nu\|_1$ designates the total variation (i.e. the mass of  its absolute value $|\nu|$) of a complex bounded measure $\nu$. If there exists a Borel family $m=(m^x)$ of probability measures $m^x$ on $G^x$, one can replace condition $({\rm ii})$ by condition:

(ii')  for all $n\in\N$ and all $x$, $m_n^x$ is a probability measure.
It suffices to replace $m_n^x$ by $m_n^x/\|m_n^x\|_1$ if $\|m_n^x\|_1$ is non zero and by $m^x$ otherwise.

\begin{remark} This definition makes sense for arbitrary Borel groupoids and, in particular, for non locally compact groups. However, in the case of a non locally compact topological group $G$, it is strictly stronger than the classical definition, which is the existence of a left invariant mean on the Banach space ${\rm UCB}(G)$ of all left uniformly continuous bounded functions on $G$. I owe the following example to V. Pestov (see \cite[Remark G.3.7]{bhv:T} for references). The unitary group $U({\mathcal H})$ of an infinite-dimensional Hilbert space ${\mathcal H}$, endowed with the weak operator topology, is amenable in the classical sense. However it is not Borel amenable in the above sense. Indeed Borel amenability is inherited by virtual subgroups while $U({\mathcal H})$ contains the free group ${\bf F}_2$ as a discrete subgroup.
\end{remark}

A {\it Borel Haar system} $\lambda$ for a Borel groupoid $G$ is a family $(\lambda^x)_{x\in G^{(0)}}$ of non-zero measures on the fibers $G^x$ such that
\begin{itemize}
\item it is Borel in the sense that for all non-negative Borel functions $f$ on $G$, $x\mapsto \int f d\lambda^x$ is Borel;
\item it is left invariant in the sense that for all $\gamma\in G,\quad\gamma\lambda^{s(\gamma)}=\lambda^{r(\gamma)}$;
\item it is proper in the sense that $G$ is the union of an increasing sequence $(A_n)_{n\in\N}$ of Borel subsets such that for all $n\in\N$, the functions $x\mapsto \lambda^x(A_n)$ are bounded on $G^{(0)}$.
\end{itemize}
As it is well-known, locally compact groups have a Borel Haar system (in that case, a single measure) and the converse is essentially true. Therefore, the groupoids of a Borel action of a locally compact group on a Borel space have a Borel Haar system. Another important class of Borel groupoids with Borel Haar systems are the countable standard Borel groupoids, i.e. such that the Borel structure is standard and the range map is countable to-one. Then the counting measures $\lambda^x$ on the fibers  $G^x$ form a Borel Haar system. The countable standard Borel groupoids include the countable discrete groups and the countable standard Borel equivalence relations. In presence of a Haar system, it is known that  the approximate invariant means of the above definition can be chosen with a density with respect to the Haar system. We recall this fact below.

\begin{definition}\label{Borel approximate density} Let $G$ be a Borel groupoid equipped with a Borel Haar system $\lambda$.
A {\it Borel approximate invariant density} is a sequence $(g_n)_{n\in\N}$ of non-negative Borel functions on $G$ such that
\begin{enumerate}
\item $\int g_n d\lambda^x\le 1 ,\quad \forall x\in G^{(0)},\quad\forall n\in\N$;
\item $\int g_n d\lambda^x\to 1$ for all $x\in G^{(0)}$;
\item $\int| g_n(\gamma^{-1}\gamma_1)-g_n(\gamma_1)|d\lambda^{r(\gamma)}(\gamma_1)\to 0$ for all $\gamma\in G$.
\end{enumerate}
\end{definition}

Thus one has the following proposition (essentially \cite[Proposition 2.2.6]{adr:amenable}).

\begin{proposition}\label{Borel density and mean} A Borel groupoid $G$ equipped with a Borel Haar system $\lambda$ is Borel amenable if and only if it has a Borel approximate invariant density.
\end{proposition}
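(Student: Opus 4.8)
The plan is to establish the non-trivial implication — that Borel amenability yields a Borel approximate invariant density — by the right-convolution trick familiar from Reiter's argument for groups: one convolves a Borel approximate invariant mean, on the right, with a fixed Borel ``probability density'' extracted from the properness of $\lambda$, the point being that right convolution leaves the left-translation estimates intact. The converse implication is immediate: if $(g_n)$ is a Borel approximate invariant density, then $m_n^x\defequal g_n\lambda^x$ is a Borel approximate invariant mean, conditions (i)--(iii) of \defnref{Borel amenable} following from the corresponding conditions of \defnref{Borel approximate density} together with the left invariance $\gamma\lambda^{s(\gamma)}=\lambda^{r(\gamma)}$, which identifies the $\lambda^{r(\gamma)}$-density of $\gamma m_n^{s(\gamma)}$ with $\gamma_1\mapsto g_n(\gamma^{-1}\gamma_1)$. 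This is essentially the argument of \cite[Proposition 2.2.6]{adr:amenable}, which I would recast in the present Borel (and possibly non-Hausdorff) setting.

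For the forward implication I would first produce, from the properness of $\lambda$, a non-negative Borel function $\psi$ on $G$ with $\int\psi\,d\lambda^z=1$ for \emph{every} $z\in G^{(0)}$. Writing $G=\bigcup_n A_n$ with $A_n$ increasing Borel and $x\mapsto\lambda^x(A_n)$ bounded, and using that each $\lambda^z$ is non-zero, the function $\beta(z)\defequal\min\{n:\lambda^z(A_n)>0\}$ is Borel (its sublevel set $\{\beta\le n\}$ is $\{z:\lambda^z(A_n)>0\}$) and finite, and $\psi(\gamma)\defequal 1_{A_{\beta(r(\gamma))}}(\gamma)/\lambda^{r(\gamma)}(A_{\beta(r(\gamma))})$ does the job. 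Given a Borel approximate invariant mean $(m_n)$, I would then define
$$g_n(\gamma_1)=\int_{G^{r(\gamma_1)}}\psi(\gamma_2^{-1}\gamma_1)\,dm_n^{r(\gamma_1)}(\gamma_2),$$
with the convention $g_n(\gamma_1)=0$ when this integral is infinite; by Tonelli this happens only on a set that is $\lambda^x$-null in each fibre, so it disturbs none of the integral identities below. Borelness of $g_n$ follows by a monotone-class argument: for an integrand $u(\gamma_1)v(\gamma_2)$ the value is $u(\gamma_1)\int v\,dm_n^{r(\gamma_1)}$, Borel because $m_n$ is, and one extends first to bounded Borel functions on $\{(\gamma_1,\gamma_2)\in G\times G:r(\gamma_1)=r(\gamma_2)\}$ and then to $\psi$ by truncation and monotone convergence.

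The verification of (i)--(iii) of \defnref{Borel approximate density} is then a short computation using Tonelli and left invariance. For $\eta\in G^x$ the substitution $\gamma_1\mapsto\eta^{-1}\gamma_1$ pushes $\lambda^x=\eta\lambda^{s(\eta)}$ forward to $\lambda^{s(\eta)}$, so $\int\psi(\eta^{-1}\gamma_1)\,d\lambda^x(\gamma_1)=1$; integrating $g_n$ over $\lambda^x$ and interchanging the order of integration gives $\int g_n\,d\lambda^x=\|m_n^x\|_1$, which is at most $1$ and tends to $1$, so (i) and (ii) hold. For (iii), the same substitution gives $g_n(\gamma^{-1}\gamma_1)=\int\psi(\eta^{-1}\gamma_1)\,d(\gamma m_n^{s(\gamma)})(\eta)$ for $\gamma_1\in G^{r(\gamma)}$, hence
$$g_n(\gamma^{-1}\gamma_1)-g_n(\gamma_1)=\int_{G^{r(\gamma)}}\psi(\eta^{-1}\gamma_1)\,d\bigl(\gamma m_n^{s(\gamma)}-m_n^{r(\gamma)}\bigr)(\eta);$$
taking absolute values, bounding by the total variation inside the integral, integrating over $\gamma_1$ against $\lambda^{r(\gamma)}$ and swapping the order (all quantities non-negative, all measures finite) collapses the $\gamma_1$-integral to $1$ and leaves exactly $\|\gamma m_n^{s(\gamma)}-m_n^{r(\gamma)}\|_1$, which tends to $0$.

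No step is deep; the care needed is of a ``Borel bookkeeping'' kind, and that is where I would expect the only friction. The two delicate points are getting the normalisation $\int\psi\,d\lambda^z=1$ to hold at \emph{every} unit rather than merely almost everywhere (this is exactly where properness, via $\beta$, is used), and making sure $g_n$ is a genuine everywhere-defined Borel function rather than an $L^1$-class, so that the pointwise requirements of \defnref{Borel approximate density} hold on the nose. Finally, since $G$ is not assumed Hausdorff, the Fubini/Tonelli manipulations over the fibred product $\{(\gamma_1,\gamma_2)\in G\times G:r(\gamma_1)=r(\gamma_2)\}$ should be reduced to the locally compact Hausdorff open covers discussed in the introduction; this is the only place the non-Hausdorff hypothesis is felt, and it introduces no real difficulty.
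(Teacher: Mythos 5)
Your proof is correct and follows essentially the same route as the paper's: the easy direction via $m_n^x=g_n\lambda^x$ with the two norm identities, and the converse by convolving $m_n^{r(\gamma)}$ against a Borel function of fibrewise $\lambda$-integral one, yielding the identical estimates $\int g_n\,d\lambda^x=\|m_n^x\|_1$ and $\int|g_n(\gamma^{-1}\gamma_1)-g_n(\gamma_1)|\,d\lambda^{r(\gamma)}(\gamma_1)\le\|\gamma m_n^{s(\gamma)}-m_n^{r(\gamma)}\|_1$. The only divergence is that you build the normalizing density $\psi$ explicitly from the properness of $\lambda$ where the paper simply invokes \cite[Lemma I.3]{ac:integration}, and you spell out the Borel bookkeeping and the handling of possibly infinite integrals, which the paper leaves implicit.
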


\begin{proof}
Given a Borel approximate invariant density $(g_n)$, one defines the measures $m_n^x=g_n\lambda^x$. Since
$$\|m_n^x\|_1=\int g_nd\lambda^x\quad{\rm and}\,\quad \|\gamma m_n^{s(\gamma)}-m_n^{r(\gamma)}\|_1=\int| g_n(\gamma^{-1}\gamma_1)-g_n(\gamma_1)|d\lambda^{r(\gamma)}(\gamma_1)$$
$(m_n)$ is a Borel approximate invariant mean. Conversely, let $(m_n)$ be a Borel approximate invariant mean. According to \cite[Lemma I.3]{ac:integration}, there exists a non-negative Borel function $f$ such that $\int f d\lambda^x=1$ for all $x\in G^{(0)}$. Define the non-negative Borel function $g_n$ on $G$ by
$$g_n(\gamma)=\int f({\gamma'}^{-1}\gamma)dm_n^{r(\gamma)}(\gamma').$$
Using Fubini's theorem and changes of variable, one obtains
$$\int g_n d\lambda^x=\|m_n^x\|_1$$
and
$$\int |g_n(\gamma^{-1}\gamma_1)-g_n(\gamma_1)|d\lambda^{r(\gamma)}(\gamma_1)\le\|\gamma m_n^{s(\gamma)}-m_i^{r(\gamma)}\|_1.$$
This shows that $(g_n)$ is a Borel approximate invariant density.
\end{proof}

With an abuse of language, we shall also call $(g_n)$ a Borel approximate invariant mean.

\begin{remark} Various definitions of amenability for countable Borel equivalence relations are given by Jackson, Kechris and Louveau in \cite{jkl:relations} as well as relations between them. Our definition of Borel amenability is {\it 1-amenability} of \cite[Definition 2.12]{jkl:relations}. Replacing the sequence by a net, these authors define a hierarchy of amenability properties according to the nature of the net and the more general notion of {\it Fr\'echet-amenability}. A countable Borel equivalence relation $E$ on a standard Borel space $X$ is called {\it hyperfinite} if it is an increasing union of a sequence of Borel sub-equivalence relations $E_n$ which are finite (meaning that the range map is finite-to-one). Following \cite[Definition 2.7]{jkl:relations}, it is called  {\it measure-amenable}  if there exists a universally measurable invariant mean, i.e. a family $(m^x)_{x\in X}$, where for all $x\in X$, $m^x$ is a mean on $L^\infty(E^x,\lambda^x)=\ell^\infty([x])$, $m^x=m^y$ if $(x,y)\in E$, such that for every standard Borel space $Z$ and every bounded Borel function $f$ on $X\times Z$, the map $(x,z)\mapsto \int f(y,z)dm^x(y)$ is universally measurable on $X\times Z$. Finally, a countable Borel equivalence relation $(E,X)$ is called {\it measurewise amenable} if for all measures $\mu$, the measured equivalence relation $(E,X,\mu)$ is amenable in the sense of Zimmer. Here are some of the implications for countable standard Borel equivalence relations (see \cite[Proposition 2.13]{jkl:relations}): hyperfiniteness $\Rightarrow$ 1-amenability $\Rightarrow$ Fr\'echet-amenability. Under the continuum hypothesis, Fr\'echet-amenability $\Rightarrow$ measure-amenability. It is also known \cite{kec:amenable} that under the continuum hypothesis, measure-amenability is equivalent to measurewise amenability.

\end{remark}

 Let us turn now to the topological setting.

\begin{definition}\label{topologically amenable} A locally compact groupoid $G$ is said to be {\it topologically amenable} if there exists a {\it topological approximate invariant mean}, i.e. a sequence $(m_n)_{n\in\N}$, where each $m_n$ is a family $(m_n^x)_{x\in G^{(0)}}$, $m_n^x$ being a finite positive measure of mass not greater than one on $G^x=r^{-1}(x)$ such that
\begin{enumerate}
\item for all $n\in\N$, $m_n$ is continuous in the sense that for all $f\in \mathcal{C}_c(G)$, $x\mapsto\int f dm_n^x$ is continuous;
\item $\|m_n^x\|_1\to 1$ uniformly on the compact subsets of $G^{(0)}$;
\item $\|\gamma m_n^{s(\gamma)}-m_n^{r(\gamma)}\|_1\to 0$ uniformly on the compact subsets of $G$.
\end{enumerate}
\end{definition}

Let us compare this definition and \cite[Definition 2.2.2]{adr:amenable}. There, one has a net $(m_i)_{i\in I}$ rather than a sequence $(m_n)_{n\in\N}$ and the measures $m_i^x$ are required to be probability measures. If $G$ is $\sigma$-compact, the net can be replaced by a sequence. In the other direction, as in the Borel case, one can normalize the families $m_n$ of the above definition to obtain continuous families of probability measures $m'_n$ satisfying the approximate invariance property (iii). Thus both definitions give the same notion of topological amenability when $G$ is $\sigma$-compact.

\vskip 5mm
We recall that a (continuous) {\it Haar system} is a family $(\lambda^x)_{(x\in G^{(0)}}$ of Radon measures on the fibers $G^x$ (which are locally compact and Hausdorff according to \cite{tu:amenable}) satisfying the above continuity assumption and the left invariance property $\gamma\lambda^{s(\gamma)}=\lambda^{r(\gamma)}$ for all $\gamma\in G$. We have seen that in presence of a Haar system, we can assume that the approximate invariant means have a density with respect to the Haar system. Our stronger assumptions lead to the following definition:

\begin{definition}\label{topological approximate density} Let $G$ be a locally compact groupoid equipped with a continuous Haar system $\lambda$.
A {\it topological approximate invariant density} is a sequence $(g_n)$ in $\mathcal{C}_c(G)^+$ such that
\begin{enumerate}
\item $\int g_n(x)d\lambda^x\le 1 ,\quad \forall x\in G^{(0)},\quad\forall i$;
\item $\int g_n(x)d\lambda^x\to 1$ uniformly on every compact subset of $G^{(0)}$;
\item  $\int| g_n(\gamma^{-1}\gamma_1)-g_n(\gamma_1)|d\lambda^{r(\gamma)}(\gamma_1)$ tends to 0  uniformly on every compact subset of $G$.
\end{enumerate}
\end{definition}

The same proof as in the Borel case gives:

\begin{proposition}\cite[Proposition 2.2.13]{adr:amenable}\label{topological mean and density} A locally compact groupoid $G$ equipped with a continuous Haar system $\lambda$ is topologically amenable if and only if it has a topological approximate invariant density.
\end{proposition}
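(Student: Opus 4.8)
The plan is to transcribe, essentially verbatim, the proof of \propref{Borel density and mean}, replacing "bounded Borel function" by "element of $\mathcal{C}_c(G)$" and "pointwise convergence" by "convergence uniform on compact subsets" throughout. For the implication "density $\Rightarrow$ mean", given a topological approximate invariant density $(g_n)$ I would set $m_n^x = g_n\lambda^x$, that is $\int\varphi\,dm_n^x = \int \varphi g_n\,d\lambda^x$ for $\varphi\in\mathcal{C}_c(G)$. Then $m_n^x$ is a positive Radon measure on $G^x$ of mass $\int g_n\,d\lambda^x\le 1$; continuity of $x\mapsto\int\varphi\,dm_n^x$ follows from the fact that $\varphi g_n\in\mathcal{C}_c(G)$ together with the continuity of the Haar system; and left invariance of $\lambda$ yields the two identities
\[
\|m_n^x\|_1=\int g_n\,d\lambda^x,\qquad
\|\gamma m_n^{s(\gamma)}-m_n^{r(\gamma)}\|_1=\int|g_n(\gamma^{-1}\gamma_1)-g_n(\gamma_1)|\,d\lambda^{r(\gamma)}(\gamma_1),
\]
so conditions (ii) and (iii) of \defnref{topologically amenable} are exactly conditions (ii) and (iii) of \defnref{topological approximate density}.

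For the converse, I would first produce a function $f\in\mathcal{C}_c(G)^+$ with $\int f\,d\lambda^x = 1$ for every $x\in G^{(0)}$; this is the topological analogue of \cite[Lemma I.3]{ac:integration}, obtained from the positivity and continuity of the Haar system by a partition-of-unity argument on the $\sigma$-compact space $G^{(0)}$. Given a topological approximate invariant mean $(m_n)$, I would then set
\[
g_n(\gamma)=\int f({\gamma'}^{-1}\gamma)\,dm_n^{r(\gamma)}(\gamma'),
\]
check that $g_n\in\mathcal{C}_c(G)^+$ — continuity comes from the continuity of the family $m_n$ applied to $\gamma'\mapsto f({\gamma'}^{-1}\gamma)$ combined with joint continuity in $(\gamma',\gamma)$, and the support of $g_n$ is controlled by $\mathrm{supp}\,f$ and $r(\mathrm{supp}\,f)$ — and then, using Fubini together with the change of variable $\gamma_1\mapsto\gamma^{-1}\gamma_1$ and the left invariance of $\lambda$, obtain $\int g_n\,d\lambda^x=\|m_n^x\|_1$ and $\int|g_n(\gamma^{-1}\gamma_1)-g_n(\gamma_1)|\,d\lambda^{r(\gamma)}(\gamma_1)\le\|\gamma m_n^{s(\gamma)}-m_n^{r(\gamma)}\|_1$. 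The uniformity on compact subsets in (ii) and (iii) is then inherited from the corresponding uniformities for $(m_n)$, after noting that $\gamma\mapsto r(\gamma)$ and $\gamma\mapsto s(\gamma)$ map compact sets to compact sets.

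The main obstacle is not the algebra but the bookkeeping imposed by the hypotheses. One must verify carefully that each auxiliary function ($\varphi g_n$ in one direction, $g_n$ in the other) genuinely lies in $\mathcal{C}_c(G)$ when $G$ is not Hausdorff, since there $\mathcal{C}_c(G)$ is only the linear span of extensions by zero of elements of $C_c(U)$ over locally compact Hausdorff open $U\subset G$; this forces one to work with such charts, to check the relevant products and pushforwards chart by chart, and to control supports so that the convergences are genuinely uniform on compacta. One must also be sure the normalizing function $f$ exists with the stated properties, and this is precisely where local compactness, the continuity of $\lambda$, and $\sigma$-compactness of $G$ are used. Granting these technical points, the passage between means and densities is a routine transcription of the Borel argument, with "uniformly on compacta" in place of "pointwise".
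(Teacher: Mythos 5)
The forward direction (density $\Rightarrow$ mean) of your proposal is correct and is exactly the paper's (implicit) argument; the only point needing care is the one you flag, namely that $\varphi g_n$ need not lie in $\mathcal{C}_c(G)$ when $G$ is non-Hausdorff, so the continuity of $x\mapsto\int\varphi g_n\,d\lambda^x$ must be checked chart by chart.

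The converse direction has a genuine gap that a verbatim transcription of the Borel proof cannot close. First, the normalizing function you postulate does not exist: if $f\in\mathcal{C}_c(G)^+$ then $\int f\,d\lambda^x=0$ for every $x$ outside the compact set $r(\mathrm{supp}\,f)$, so $\int f\,d\lambda^x=1$ for \emph{all} $x\in G^{(0)}$ forces $G^{(0)}$ to be compact; one can only normalize on a prescribed compact subset (this is what the proof of \thmref{main} does with the set $L'$). Second, and more seriously, your support claim for $g_n(\gamma)=\int f({\gamma'}^{-1}\gamma)\,dm_n^{r(\gamma)}(\gamma')$ is reversed: $g_n(\gamma)\neq 0$ only forces $s(\gamma)\in s(\mathrm{supp}\,f)$ and $\gamma\in\bigl(\mathrm{supp}\,m_n^{r(\gamma)}\bigr)\cdot\mathrm{supp}\,f$, and since \defnref{topologically amenable} imposes no support condition on the measures $m_n^x$, the set $\{g_n\neq 0\}$ is in general not relatively compact (already for a group, $m_n*f$ has non-compact support whenever $m_n$ does). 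Hence $(g_n)$ need not lie in $\mathcal{C}_c(G)^+$ and is not a topological approximate invariant density, and moreover $\int g_n\,d\lambda^x=\int\lambda(f)(s(\gamma'))\,dm_n^x(\gamma')$ can fall well short of $\|m_n^x\|_1$ once $f$ is only normalized on a compact set. Repairing this requires an extra step absent from the Borel argument: normalize the $m_n^x$ to probability measures, truncate them to compactly supported measures losing at most $\epsilon_n$ of mass uniformly for $x$ in an exhausting sequence of compacta (Dini's theorem), and only then convolve with a suitably normalized $f_n$ and re-estimate (ii) and (iii) up to $O(\epsilon_n)$. This is the content of the cited \cite[Proposition 2.2.13]{adr:amenable}; alternatively, for $\sigma$-compact $G$ this direction is subsumed by \thmref{main}, whose implication (ii)$\Rightarrow$(iii) is precisely the machinery (Hahn--Banach applied to the convex set $C$) designed to manufacture compactly supported densities out of means.
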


Again, we shall also call $(g_n)$ as above a topological approximate invariant mean. Proposition 2.2.13 of \cite{adr:amenable} gives the equivalence of the notion of topological amenability used in the present article and the original definition which appears after Definition 2.3.6, page 92, of  \cite{ren:approach} (in the case when $G$ is $\sigma$-compact since we consider sequences only).

\vskip 5mm
Let $G$ be a locally compact groupoid endowed with a continuous Haar system $\lambda$. We define the Banach space ${\mathcal E}$ as the completion of the linear space ${\mathcal C}_c(G)$ with respect to the norm
$$\|f\|=\sup_{x\in G^{(0)}}\int |f(\gamma)| d\lambda^x(\gamma).$$
It is useful to view ${\mathcal E}$ as the space of continuous sections vanishing at infinity of a Banach bundle over $G^{(0)}$. We denote by $L^1(G,\lambda)$ the Banach bundle which has $L^1(G^x,\lambda^x)$ as fiber above $x\in G^{(0)}$ and ${\mathcal C}_c(G)$ as total space of continuous sections. Given $f\in  {\mathcal C}_c(G)$ and $x\in G^{(0)}$, we denote by $f_{|x}$ its restriction to $G^x$. The Banach bundle $L^1(G,\lambda)$ is upper semi-continuous in the sense that the functions $x\mapsto \|f_{|x}\|_x=\int |f| d\lambda^x$ are upper semi-continuous for all $f\in  {\mathcal C}_c(G)$  (see \cite[Lemma 1.4]{ks:regular}). We denote by $C_0(G^{(0)}, L^1(G,\lambda))$ the space of continuous sections vanishing at infinity endowed with the norm $\|f\|=\sup_{x\in G^{(0)}}\|f_{|x}\|_x$. Since it is complete and has ${\mathcal C}_c(G)$ as a dense subspace, the Banach spaces ${\mathcal E}$ and $C_0(G^{(0)}, L^1(G,\lambda))$ are identical. We need a description of the dual Banach space ${\mathcal E}^*$. This description could be derived from the appendix of \cite{gie:bundles}, where the general case of an upper semi-continuous Banach bundle $p:E\ra X$ is studied. We prefer to give a direct proof adapting to the non-Hausdorff case the results of Chapter 1 of \cite{adr:amenable}. As in Section 1.1 of \cite{adr:amenable}, we consider two locally compact (but not necessarily Hausdorff) spaces $X,Y$, a surjective continuous map $\pi:Y\ra X$ and a family 
$\alpha = \lbrace\alpha^x : x \in X\rbrace$
of positive Radon measures $\alpha^x$  on $\pi^{-1}(x)$ of full support such that
for every $f\in {\mathcal C}_c(Y)$, the function $\alpha(f) : x \mapsto
\int f d\alpha^x$ belongs to ${\mathcal C}_c(X)$. We call $\alpha$ a full continuous $\pi$-system. The following proposition extends \cite[Proposition 1.1.5]{adr:amenable} to the case when the space $Y$ is not necessarily Hausdorff. The proof is by reduction to the Hausdorff case

\begin{proposition}\label{dual} Let $\pi:Y\ra X$ and $\alpha$ be as above. We assume that $Y$ is $\sigma$-compact, locally compact but not necessarily Hausdorff and that $X$ and the fibers $\pi^{-1}(x)$ are Hausdorff. We define $C_0(X, L^1(Y,\alpha))$ as the completion of ${\mathcal C}_c(Y)$ for the norm $\|f\|=\sup_X\int |f| d\alpha^x$. Then the elements of its dual space  are complex Borel Radon measures on $Y$ of the form $\nu=\varphi(\mu\circ\alpha)$ where $\mu$ is a finite positive measure on $X$ and $\varphi\in L^\infty(Y,\mu\circ\alpha)$. The norm of $\nu$ is given by 
$$\|\nu\|=\inf \|\mu\|_1\|\varphi\|_\infty ,$$
 where the infimum is taken over all the representations $\nu=\varphi(\mu\circ\alpha)$. 
\end{proposition}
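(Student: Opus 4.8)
The plan is to reduce the statement to the Hausdorff case, i.e.\ to \cite[Proposition 1.1.5]{adr:amenable}, by a disjoint–union device. Since $Y$ is locally compact and $\sigma$-compact (hence Lindel\"of), one covers it by a countable family $(U_n)_{n\in\N}$ of open, locally compact, Hausdorff, $\sigma$-compact subsets; in the non-Hausdorff case this already requires a small point-set argument, starting from a compact exhaustion of $Y$ and compact Hausdorff neighborhoods of its points. One then chooses a partition of unity subordinate to a locally finite refinement of this cover: functions $\chi_n\in C_c(U_n)$ with $0\le\chi_n\le 1$ and $\sum_n\chi_n\equiv 1$ on $Y$, the sum being locally finite. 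Now form the Hausdorff, locally compact, $\sigma$-compact space $Z=\bigsqcup_n U_n$ with its \'etale surjection $p:Z\ra Y$ (the inclusion on each clopen piece $U_n$), and set $\tilde\pi=\pi\circ p:Z\ra X$ together with $\tilde\alpha^x=\bigsqcup_n(\alpha^x|_{U_n})$. One checks readily that $\tilde\pi$ is still surjective, that the fibres $\tilde\pi^{-1}(x)$ are Hausdorff, and that $\tilde\alpha$ is a full continuous $\tilde\pi$-system, so that \cite[Proposition 1.1.5]{adr:amenable} applies to $(Z,X,\tilde\pi,\tilde\alpha)$.

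The elementary half of the norm formula comes first and needs nothing special: if $\mu$ is a finite positive measure on $X$ and $\varphi\in L^\infty(Y,\mu\circ\alpha)$, then $\nu=\varphi(\mu\circ\alpha)$ is a complex Borel Radon measure on $Y$, and for $f\in\mathcal{C}_c(Y)$ one has $|\langle\nu,f\rangle|\le\|\varphi\|_\infty\|\mu\|_1\sup_x\int|f|\,d\alpha^x=\|\varphi\|_\infty\|\mu\|_1\|f\|$, so $\nu\in\mathcal{E}^*$ with $\|\nu\|\le\|\mu\|_1\|\varphi\|_\infty$; taking the infimum over representations gives $\|\nu\|\le\inf\|\mu\|_1\|\varphi\|_\infty$.

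For the converse, given $L\in\mathcal{E}^*$, I would introduce the two maps $p_*:\mathcal{C}_c(Z)\ra\mathcal{C}_c(Y)$, $(f_n)_n\mapsto\sum_n\tilde f_n$, and $s:\mathcal{C}_c(Y)\ra\mathcal{C}_c(Z)$, $f\mapsto(\chi_n f)_n$. A direct computation with the norms of $\mathcal{E}_Y:=\mathcal{E}$ and $\mathcal{E}_Z:=C_0(X,L^1(Z,\tilde\alpha))$ shows that $p_*$ is a contraction, that $s$ is isometric (here $\sum_n\chi_n\equiv 1$ is exactly what forces $\int|s(f)|\,d\tilde\alpha^x=\int|f|\,d\alpha^x$), and that $p_*\circ s=\mathrm{id}$. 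Hence $\tilde L:=L\circ p_*\in\mathcal{E}_Z^*$ satisfies $L=\tilde L\circ s$ and $\|\tilde L\|=\|L\|$. Applying the Hausdorff case to $\tilde L$ produces $\tilde\nu=\tilde\varphi(\mu\circ\tilde\alpha)$ with $\mu$ finite positive on $X$, $\tilde\varphi\in L^\infty(Z,\mu\circ\tilde\alpha)$ and $\|\tilde L\|=\inf\|\mu\|_1\|\tilde\varphi\|_\infty$. Writing $\tilde\varphi=(\varphi_n)_n$ and setting $\varphi:=\sum_n\chi_n\varphi_n$ on $Y$ (a locally finite sum, defined $\mu\circ\alpha$-almost everywhere, with $\|\varphi\|_\infty\le\|\tilde\varphi\|_\infty$ because $\sum_n\chi_n=1$ and $0\le\chi_n$), one unwinds $L(f)=\tilde L(s(f))=\int_X\int_Y f\varphi\,d\alpha^x\,d\mu(x)=\langle\varphi(\mu\circ\alpha),f\rangle$ for all $f\in\mathcal{C}_c(Y)$. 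Thus $L$ is represented by $\nu=\varphi(\mu\circ\alpha)$ and $\inf\|\mu\|_1\|\varphi\|_\infty\le\|\tilde L\|=\|L\|=\|\nu\|$, which together with the elementary inequality gives the asserted norm formula.

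The main obstacle is the non-Hausdorff combinatorics at the outset: extracting from a compact exhaustion a locally finite countable cover of $Y$ by open, $\sigma$-compact, locally compact Hausdorff subsets, and building the subordinate $\chi_n\in C_c(U_n)$ with $\sum_n\chi_n\equiv 1$, all the while remembering that the extension by zero of a function supported in a single $U_n$ need not be continuous on $Y$. This is standard but delicate and I would import it from the tools of \cite{pat:gpd,ks:regular,tu:non-hausdorff}. Everything else is bookkeeping: the verification that $\mu\circ\alpha$ and $\mu\circ\tilde\alpha$ are Radon measures in the non-Hausdorff sense, that $|\nu|$ and $p_*\tilde\nu$ behave as expected under the \'etale map $p$, and the interchanges of the (locally finite) sums with the integrals.
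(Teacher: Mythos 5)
Your reduction to the Hausdorff case via the disjoint union $Z=\bigsqcup_n U_n$ is exactly the device the paper uses (there $Z=Y_{\mathcal U}$ and your $p_*$ is the map $\beta$ induced by the counting measures along the fibres of the identification map $Y_{\mathcal U}\rightarrow Y$). The gap is in the descent from $Z$ back to $Y$, which you base on a partition of unity $\chi_n\in C_c(U_n)$ with $\sum_n\tilde\chi_n\equiv 1$ and on the section $s(f)=(\chi_n f|_{U_n})_n$. Neither object exists in general. On the line with a doubled origin, covered by its two copies $U_1,U_2$ of $\mathbf{R}$, one has $\tilde\chi_2(0_1)=0$, hence $\chi_1(0_1)=1$; continuity of $\chi_1$ on $U_1$ gives $\chi_1(t)\to 1$, so $\chi_2(t)\to 0$ as $t\to 0$, and continuity of $\chi_2$ on $U_2$ forces $\chi_2(0_2)=0$; since also $\tilde\chi_1(0_2)=0$, the sum vanishes at $0_2$ instead of being $1$. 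So no such $\mathcal{C}_c$-partition of unity exists, and the references you invoke do not provide one. Moreover, even granting some $\chi_n$, the restriction $\chi_n f|_{U_n}$ of $f\in{\mathcal C}_c(Y)$ need not lie in $C_c(U_n)$: $f$ is a sum of extensions by zero, which are generically discontinuous at points of $U_n$ adherent to, but outside, the supporting Hausdorff open set. This is precisely the pathology you flag at the end, but it cannot be outsourced; it kills both the isometry of $s$ and the identity $p_*\circ s=\mathrm{id}$ as you state them.

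The paper's proof shows how to dispense with the section altogether. One only needs the norm-decreasing map $\beta: C_0(X,L^1(Y_{\mathcal U},\alpha\circ\beta))\rightarrow C_0(X,L^1(Y,\alpha))$. Given $\phi$ in the dual of the target, one applies the Hausdorff case \cite[Proposition 1.1.5]{adr:amenable} to $\phi\circ\beta$ to obtain a finite positive measure $\mu$ on $X$ with $|\nu\circ\beta|\le\mu\circ\alpha\circ\beta$ and $\|\mu\|_1\le\|\phi\circ\beta\|\le\|\phi\|$, where $\nu$ is the Radon measure defined by $\phi$ on ${\mathcal C}_c(Y)$. The domination descends to $|\nu|\le\mu\circ\alpha$ using only that $|\nu\circ\beta|=|\nu|\circ\beta$ and that every bounded Borel function on $Y$ with compactly contained support lifts through $\beta$ --- a purely Borel statement immune to the continuity problems above. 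Finally, $\sigma$-compactness makes $|\nu|$ $\sigma$-finite, and the Radon--Nikodym theorem on $Y$ itself yields $\varphi$ with $\|\varphi\|_\infty\le 1$, giving $\|\mu\|_1\|\varphi\|_\infty\le\|\phi\|$ without ever needing the reverse inequality $\|\phi\|\le\|\phi\circ\beta\|$ that your section was designed to supply. Your elementary estimate $\|\nu\|\le\|\mu\|_1\|\varphi\|_\infty$ is correct and is all that is required in the other direction.
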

\begin{proof} Recall from \cite{ks:regular,mw:equivalence} that for for $f\in {\mathcal C}_c(Y)$, we do not have necessarily $|f|\in {\mathcal C}_c(Y)$. However $|f|$ is a Borel function and the function $x\mapsto \int |f|d\alpha^x$ is upper semi-continuous (\cite[Lemma 1.4]{ks:regular}). Just as in \cite{ks:regular}, we fix a cover of $Y$ ${\mathcal U}=(U_i)_{i\in I}$ by open Hausdorff subets $U_i$ and form the disjoint union $Y_{\mathcal U}=\sqcup_{i\in I}U_i$, which is a locally compact Hausdorff space. The identification map $\pi_{\mathcal U}: Y_{\mathcal U}\ra Y$ is a local homeomorphism. The system of counting measures along the fibers of $\pi_{\mathcal U}$ is a full continuous $\pi_{\mathcal U}$-system $\beta$ in the above sense. The corresponding map $\beta: C_c(Y_{\mathcal U})\ra {\mathcal C}_c(Y)$ satisfies $\|\beta(F)\|\le\sup_X\int |F| d(\alpha\circ\beta)^x$ where $(\alpha\circ \beta)^x=\int \beta^yd\alpha^x(y)$. Therefore, it extends to a norm-decreasing map $\beta: C_0(X, L^1(Y_{\mathcal U},\alpha\circ \beta))\ra C_0(X, L^1(Y,\alpha))$. Let $\phi$ be a continuous linear form on $C_0(X, L^1(Y,\alpha))$. Then $\phi\circ\beta$ is a continuous linear form on $C_0(X, L^1(Y_{\mathcal U},\alpha\circ \beta))$.   As in the Hausdorff case, the restriction of $\phi$ to ${\mathcal C}_c(Y)$ is a Radon measure; we denote by $\nu$ the associated Borel Radon measure on $Y$. The Borel Radon measure defined by the restriction of $\phi\circ\beta$ to $C_c(Y_{\mathcal U})$ is $\nu\circ\beta$. Since $Y_{\mathcal U}$ is Hausdorff, we can apply \cite[Proposition 1.1.5]{adr:amenable} to conclude that $\nu\circ\beta$ is $(\alpha\circ\beta)$-bounded, which means the existence of a finite positive measure $\mu$ on $X$ such that $|\nu\circ\beta|\le \mu\circ\alpha\circ\beta$ and $\|\mu\|_1\le \|\phi\circ\beta\|$. Since $|\nu\circ\beta|=|\nu|\circ\beta$ and every bounded Borel function $f$ on $Y$ with support contained in a compact subset can be written as $\beta(F)$ where $F$ is a bounded Borel function on $Y_{\mathcal U}$ with support contained in a compact subset, we obtain $|\nu|\le \mu\circ\alpha$. Moreover $\|\mu\|_1\le \|\phi\|$. Since $Y$ is $\sigma$-compact, $|\nu|$ is $\sigma$-finite. According to the Radon-Nikodym theorem, there exists $\varphi\in L^\infty (Y,\mu\circ\alpha)$ such that $\nu=\varphi(\mu\circ\alpha)$ and $\|\varphi\|_\infty\le 1$.
\end{proof}

Note that in this identification of the dual, positivity is respected: as mentioned earlier, a linear functional $\phi$ on ${\mathcal C}_c(Y)$ which is positive in the sense that $\phi(f)\ge 0$ for all $f\in {\mathcal C}_c(Y)^+$ defines a positive Borel Radon measure on $Y$.
\vskip 5mm          

It is well known that the convolution product of $f,g\in {\mathcal C}_c(G)$ defined by
$$(f*g)(\gamma_1)=\int f(\gamma)g(\gamma^{-1}\gamma_1) d\lambda^{r(\gamma_1)}(\gamma)$$
turns ${\mathcal C}_c(G)$ into an algebra and that $\|f*g\|\le\|f\|\|g\|$. Therefore, this product extends to $\mathcal E$ and turns it into a Banach algebra. Alternatively, by introducing , for $\gamma\in G$, the isometry
$$L(\gamma): L^1(G^{s(\gamma)},\lambda^{s(\gamma)})\ra L^1(G^{r(\gamma)},\lambda^{r(\gamma)})$$
defined by $L(\gamma)g_{s(\gamma)}(\gamma_1)=g_{s(\gamma)}(\gamma^{-1}\gamma_1)$, we may write the convolution product as a left action of $C_c(G)$ on $C_0(G^{(0)}, L^1(G,\lambda))$:
$$(L(f)g)_{|x}:=(f*g)_{|x}=\int f(\gamma)[L(\gamma)g_{|s(\gamma)}] d\lambda^x(\gamma)$$

For shorthand, we use the following notation: 
given $f\in {\mathcal C}_c(G)$, we define for $(\gamma,\gamma_1)\in G_{r,r}^{(2)}:=\{(\gamma,\gamma_1)\in G\times G: r(\gamma)=r(\gamma_1)\}$:
$$f'(\gamma,\gamma_1)=f(\gamma^{-1}\gamma_1)-f(\gamma_1).$$
Alternatively, we may view $f'$ as a section of the pull-back bundle $r^*L^1(G,\lambda)$:
$$f'_{|\gamma}=L(\gamma)f_{|s(\gamma)}-f_{|r(\gamma)}.$$
\vskip 5mm
 Given $f\in {\mathcal C}_c(G)$ and $m\in{\mathcal E}^{**}$, we define $f*m\in{\mathcal E}^{**}$ by bitransposition.

 \begin{definition}\label{mean}\cite[Definition 3.3.4]{adr:amenable} Let $(G,\lambda)$ be a locally compact groupoid with a continuous Haar system. A {\it topological invariant mean} is an element $m\in {\mathcal E}^{**}$, where ${\mathcal E}$ is the Banach space $C_0(G^{(0)},L^1(G,\lambda))$, such that
\begin{enumerate}
\item $\|m\|\le 1$ and $\nu\ge 0\Rightarrow m(\nu)\ge 0$;
\item for any probability measure $\mu$ on $G^{(0)}$, $m(\mu\circ\lambda)=1$;
\item for any $f\in C_c(G)$, we have $f*m=(\lambda(f)\circ r)m$.
\end{enumerate}
\end{definition}

Let us give two lemmas before stating and proving the main theorem.

We introduce the convex set
$$\Lambda^+_1={\mathcal C}_c(G)^+_1=\{f\in {\mathcal C}_c(G): f\ge 0,\quad \forall x\in G^{(0)}\quad \int f d\lambda^x\le 1\}.$$
We recall  the following result from \cite{adr:amenable}, where
$j:{\mathcal E}\ra {\mathcal E}^{**}$ denotes the canonical embedding the bidual. 

\begin{lemma}\cite[Lemma 1.2.7]{adr:amenable}
The image by $j$ of $\Lambda^+_1$ is dense in the positive part of the unit ball of ${\mathcal E}^{**}$ with respect to the weak*-topology.
\end{lemma}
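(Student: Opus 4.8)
The plan is to show that $j(\Lambda_1^+)$ is weak*-dense in the positive part $B$ of the unit ball of ${\mathcal E}^{**}$ by a Hahn--Banach separation argument, using the description of ${\mathcal E}^*$ from \propref{dual} applied to $\pi = r : G \to G^{(0)}$ and $\alpha = \lambda$. First I would recall that ${\mathcal C}_c(G)^+$ is weak*-dense in the positive part of the unit ball of ${\mathcal E}^{**}$ in the sense that its norm closure in ${\mathcal E}$ is the whole positive cone; the point is the mass constraint $\int f\, d\lambda^x \le 1$. So the real content is: every $m \in B$ is a weak*-limit of $j(f)$ with $f \in {\mathcal C}_c(G)$, $f \ge 0$, and $\|f\|\le 1$. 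Since $\Lambda_1^+$ is convex, its weak*-closure $\overline{j(\Lambda_1^+)}$ in ${\mathcal E}^{**}$ is a weak*-closed convex set, and it suffices to show that if $m \in {\mathcal E}^{**}$ is positive with $\|m\|\le 1$ but $m \notin \overline{j(\Lambda_1^+)}$, we reach a contradiction.

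Next I would invoke the Hahn--Banach separation theorem in the locally convex space $({\mathcal E}^{**}, \text{weak*})$: there is a weak*-continuous linear functional, i.e. an element $\nu \in {\mathcal E}^*$, and a real number $c$, with $\mathrm{Re}\,\langle j(f), \nu\rangle \le c < \mathrm{Re}\, m(\nu)$ for all $f \in \Lambda_1^+$. Replacing $\nu$ by a suitable rotate and using that $\Lambda_1^+$ lies in the real cone, I may assume $\nu$ is "real" in the appropriate sense, so that $\int f\, d\mathrm{Re}\,\nu \le c$ for all $f \in {\mathcal C}_c(G)^+$ with $\int f\,d\lambda^x \le 1$. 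Writing $\nu$ via \propref{dual} as $\varphi(\mu\circ\lambda)$, the left-hand side is $\int f\,\varphi\, d(\mu\circ\lambda) = \int_{G^{(0)}} \left(\int_{G^x} f\varphi \, d\lambda^x\right) d\mu(x)$. The supremum over $f \in \Lambda_1^+$ of this quantity equals $\int_{G^{(0)}} \operatorname*{ess\,sup}_{\gamma \in G^x}\big(\mathrm{Re}\,\varphi(\gamma)\big)^+ \, d\mu(x)$ — essentially one puts the mass of $f_{|x}$ where $\mathrm{Re}\,\varphi$ is largest on $G^x$, as long as it is positive. Call this quantity $M$; then $c \ge M \ge 0$, and in particular $\mathrm{Re}\,m(\nu) > M$.

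Finally I would bound $\mathrm{Re}\, m(\nu)$ from above by $M$, contradicting the strict inequality. The key is that positivity of $m$ together with $\|m\|\le 1$ forces $m$ to "respect" the fiberwise essential-supremum structure of the dual pairing. Concretely, given the representation $\nu = \varphi(\mu\circ\lambda)$, one can approximate: for $\varepsilon > 0$ pick a Borel function $h$ on $G^{(0)}$ with $h(x) \ge \big(\mathrm{Re}\,\varphi\big)$ on a set of positive $\lambda^x$-measure and $h(x) \le \operatorname*{ess\,sup}_{G^x}(\mathrm{Re}\,\varphi)^+ + \varepsilon$, and compare $\nu$ with $(h\circ r)(\mu \circ \lambda)$ or rather dominate $\mathrm{Re}\,\nu$ by a positive measure of the form $\psi(\mu'\circ\lambda)$ whose value against any $m \in B$ is controlled. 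The cleanest route is: since $m$ is positive and norm $\le 1$, for any positive $\nu' \in {\mathcal E}^*$ one has $0 \le m(\nu') \le \|\nu'\|$, and $\|\nu'\|$ computed via \propref{dual} is exactly the relevant infimum of $\|\mu'\|_1\|\psi\|_\infty$; choosing the representation carefully gives $m(\mathrm{Re}\,\nu) \le M$. I expect the main obstacle to be precisely this last step: making rigorous the claim that the weak*-continuous functionals separating points are exactly controlled by the fiberwise $\mathrm{ess\,sup}$, i.e. identifying $\sup_{f\in\Lambda_1^+}\int f\,d\mathrm{Re}\,\nu$ and dominating $m(\nu)$ by it, in the non-Hausdorff setting where $|f|\notin {\mathcal C}_c(G)$ and one must work with Borel Radon measures and the disjoint-union trick $Y_{\mathcal U}$ used in the proof of \propref{dual}.
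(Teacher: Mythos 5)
The paper itself offers no proof of this lemma: it is quoted verbatim from \cite[Lemma 1.2.7]{adr:amenable}. Your Hahn--Banach-plus-dual-description strategy is the natural (and, as far as I can tell, the intended) route, so the architecture is fine: the reduction to separating a point of the positive unit ball from the weak*-closed convex set $\overline{j(\Lambda_1^+)}$, the replacement of $\nu$ by $\mathrm{Re}\,\nu$ (no rotation is needed, since a positive $m$ sends real measures to reals), and the final domination
$\mathrm{Re}\,m(\nu)=m(\mathrm{Re}\,\nu)\le m\bigl((h\circ r)(\mu\circ\lambda)\bigr)\le\|(h\mu)\circ\lambda\|\le\int h\,d\mu=M$
with $h(x)=\operatorname{ess\,sup}_{G^x}(\mathrm{Re}\,\varphi)^+$ are all correct and use \propref{dual} exactly as they should.

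The genuine gap is the unproved identity $\sup_{f\in\Lambda_1^+}\mathrm{Re}\,\langle f,\nu\rangle=M$, specifically the inequality $\sup_f\ge M$. Your contradiction needs the chain $\mathrm{Re}\,m(\nu)\le M\le\sup_f\le c<\mathrm{Re}\,m(\nu)$; the separation only hands you $\sup_f\le c$, so if $\sup_f<M$ nothing contradicts $c<\mathrm{Re}\,m(\nu)\le M$. (The easy direction $\sup_f\le M$, which you do justify, points the wrong way here.) Proving $\sup_f\ge M$ means producing, for each $\epsilon>0$, a single $f\in{\mathcal C}_c(G)^+$ with $\int f\,d\lambda^x\le1$ for \emph{every} $x\in G^{(0)}$ whose fiberwise mass concentrates, for $\mu$-almost every $x$, on the sets $\{\gamma\in G^x:\mathrm{Re}\,\varphi(\gamma)>h(x)-\epsilon\}$. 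This is a genuine construction --- a measurable selection plus a Lusin-type approximation in $L^1(\mu\circ\lambda)$, followed by a renormalization such as dividing by $\max(1,\lambda(f)\circ r)$ to restore the uniform mass bound, all carried through the $Y_{\mathcal U}$ reduction in the non-Hausdorff case --- and one must also check that $x\mapsto h(x)$ is $\mu$-measurable. It is precisely the content-bearing step of the lemma; you correctly flag it as ``the main obstacle'' but leave it unresolved, so the proposal is an outline rather than a proof at exactly the point where the work lies.
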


We shall use two basic results \cite{buc:bc} about the strict topology of the multiplier algebra $C_b(X)$ of the commutative C*-algebra $C_0(X)$ without a unit. Although we only need the commutative case, it is as well to give the second result for an arbitrary C*-algebra. 

\begin{lemma}\cite[Theorem 1]{buc:bc}\label{buck1} Let $X$ be a locally compact Hausdorff space. The strict topology on the space $C_b(X)$ of bounded continuous functions on $X$ agrees on norm-bounded subsets of $C_b(X)$ with the the topology of uniform convergence on compact sets.
\end{lemma}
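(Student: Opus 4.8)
The plan is to prove the two topologies agree on norm-bounded sets by making both inclusions quantitative, using only two elementary facts about a locally compact Hausdorff space $X$: Urysohn's lemma, which for every compact $K\subset X$ furnishes $\varphi\in C_c(X)\subset C_0(X)$ with $0\le\varphi\le 1$ and $\varphi\equiv 1$ on $K$; and the defining property of $C_0(X)$, that every $\varphi\in C_0(X)$ is, outside a suitable compact set, arbitrarily small. Recall that the strict topology on $C_b(X)$, viewed as the multiplier algebra of $C_0(X)$, is the locally convex topology given by the seminorms $f\mapsto\|\varphi f\|_\infty$, $\varphi\in C_0(X)$, while uniform convergence on compact sets is given by the seminorms $f\mapsto\sup_{x\in K}|f(x)|$, $K\subset X$ compact. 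The easy direction, valid on all of $C_b(X)$, is that the strict topology is finer: given a compact $K$, pick $\varphi$ as in Urysohn's lemma, so that $\sup_{x\in K}|f(x)|\le\|\varphi f\|_\infty$; thus every compact-convergence seminorm is dominated by a strict seminorm.

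The substantive direction is the converse on a norm ball $B_r=\{f\in C_b(X):\|f\|_\infty\le r\}$ (with $r>0$; the case $r=0$ is trivial). Here I would fix $\varphi\in C_0(X)$, which we may assume nonzero, and $\varepsilon>0$, choose a compact set $K$ with $|\varphi(x)|<\varepsilon/(2r)$ for $x\notin K$, and set $\delta=\varepsilon/(2\|\varphi\|_\infty)$. Then for any $f\in B_r$ with $\sup_{x\in K}|f(x)|<\delta$ one has $|\varphi(x)f(x)|\le\|\varphi\|_\infty\sup_{y\in K}|f(y)|<\varepsilon/2$ for $x\in K$, and $|\varphi(x)f(x)|\le|\varphi(x)|\,r<\varepsilon/2$ for $x\notin K$, whence $\|\varphi f\|_\infty\le\varepsilon/2<\varepsilon$. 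This shows that the identity map from $B_r$ with the topology of uniform convergence on compacts to $B_r$ with the strict topology is continuous.

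Combining the two paragraphs, the strict and compact-convergence topologies induce the same topology on each ball $B_r$; since every norm-bounded subset of $C_b(X)$ lies in some $B_r$ and carries the corresponding subspace topology in either case, the two topologies agree on all norm-bounded subsets. I do not expect a genuine obstacle; the only point deserving care is that the converse estimate really uses the norm bound — the factor $r$ is essential — which is precisely why the statement is restricted to norm-bounded sets, the strict topology being strictly finer than uniform convergence on compacts on all of $C_b(X)$ as soon as $X$ is non-compact.
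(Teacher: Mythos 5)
Your proof is correct, but there is nothing in the paper to compare it with: the paper does not prove this lemma at all, it quotes it verbatim as Theorem 1 of \cite{buc:bc} and uses it as a black box (the proof environment that follows it in the source belongs to Lemma~\ref{buck2}). What you have supplied is the standard elementary argument that the citation stands in for, and it is the right one. Both estimates are sound: Urysohn's lemma gives the domination of each seminorm $\sup_{K}|f|$ by the strict seminorm $\|\varphi f\|_\infty$ on all of $C_b(X)$, and splitting $\|\varphi f\|_\infty$ into the contribution from a compact $K$ (controlled by $\sup_K|f|$ and $\|\varphi\|_\infty$) and from its complement (controlled by the norm bound $r$ and the decay of $\varphi$) gives the converse on the ball $B_r$. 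The one point to tighten is that your second estimate, as written, only establishes continuity of the identity map from $B_r$ with the topology of uniform convergence on compact sets to $B_r$ with the strict topology \emph{at the origin}; for an arbitrary base point $f_0\in B_r$ you should apply the same estimate to $f-f_0$, which lies in $B_{2r}$, i.e.\ run the argument with $2r$ in place of $r$. This costs only a factor of $2$ in the choice of $K$ and $\delta$, so it is a cosmetic fix rather than a gap. Your closing observation, that the norm bound is genuinely needed because the strict topology is strictly finer than uniform convergence on compact sets on all of $C_b(X)$ whenever $X$ is non-compact, is accurate and explains why the statement is restricted to norm-bounded subsets.
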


\begin{lemma}\cite[Theorem 2]{buc:bc}\label{buck2} Let $A$ be a C*-algebra. The inclusion map $i$ of $A$ into its multiplier algebra $M(A)$ identifies the dual of $M(A)$ equipped with the strict topology and the dual of $A$.
\end{lemma}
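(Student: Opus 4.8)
The plan is to deduce this from two ingredients: the strict density of $A$ in its multiplier algebra, and the Cohen--Hewitt factorization theorem for Banach modules. Recall that the strict topology $\beta$ on $M(A)$ is the locally convex topology generated by the seminorms $m\mapsto\|am\|$ and $m\mapsto\|ma\|$ with $a\in A$. Fix an approximate unit $(e_\lambda)$ for $A$. The first step is to observe that $A$ is $\beta$-dense in $M(A)$: for $m\in M(A)$ and $a\in A$ one has $\|a(e_\lambda m-m)\|=\|(ae_\lambda-a)m\|\le\|ae_\lambda-a\|\,\|m\|\to 0$, and likewise $\|(e_\lambda m-m)a\|=\|(e_\lambda-1)(ma)\|\to 0$ because $ma\in A$ and $(e_\lambda)$ is an approximate unit. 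Since the elements $e_\lambda m$ lie in $A$ (an ideal of $M(A)$) and converge strictly to $m$, it follows that the restriction map $\Phi\mapsto\Phi|_A$ from the dual of $(M(A),\beta)$ to the dual of $A$ is injective.

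The second step, surjectivity, is where the factorization theorem enters. Given a bounded linear functional $\phi$ on $A$, write $\phi(x)=\psi(xa)$ for some $a\in A$ and some bounded linear functional $\psi$ on $A$ (Cohen--Hewitt, applied to $A^*$ as a Banach $A$-module). Then for $x\in A$ we have $|\phi(x)|\le\|\psi\|\,\|xa\|$, and $x\mapsto\|xa\|$ is the restriction to $A$ of one of the defining seminorms of $\beta$ on $M(A)$; hence $\phi$ is uniformly $\beta$-continuous on the $\beta$-dense subspace $A$, dominated by a globally $\beta$-continuous seminorm. Therefore $\phi$ extends (uniquely) to a $\beta$-continuous linear functional $\tilde\phi$ on $M(A)$ with $|\tilde\phi(m)|\le\|\psi\|\,\|ma\|$ for every $m\in M(A)$, noting $ma\in A$. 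This shows the restriction map is onto.

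The third step is that the identification is isometric. Since $A\hookrightarrow M(A)$ is isometric, restriction does not increase norm, so $\|\phi\|=\|\tilde\phi|_A\|\le\|\tilde\phi\|$. For the reverse inequality, take $m\in M(A)$ with $\|m\|\le 1$; then $e_\lambda m\in A$ with $\|e_\lambda m\|\le 1$, so $|\phi(e_\lambda m)|\le\|\phi\|$, and since $e_\lambda m\to m$ strictly and $\tilde\phi$ is $\beta$-continuous, $\tilde\phi(m)=\lim_\lambda\phi(e_\lambda m)$, whence $|\tilde\phi(m)|\le\|\phi\|$. Thus $\|\tilde\phi\|\le\|\phi\|$, and the restriction map is an isometric isomorphism of the dual of $(M(A),\beta)$ onto the dual of $A$.

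The one genuinely delicate point is the appeal to the factorization theorem in the second step: it is exactly what guarantees that every bounded functional on $A$ is already continuous for the strict topology coming from $M(A)$, which is not transparent by hand (in the commutative model $A=C_0(X)$, $M(A)=C_b(X)$ it amounts to writing a finite Radon measure $\mu$ as $\mu=g\cdot\nu$ with $g\in C_0(X)$ positive on the support of $\mu$, the very feature underlying \lemref{buck1} as well). Once that is granted, the rest is a routine manipulation with the approximate unit. One could also bypass Cohen--Hewitt by realizing $M(A)\subset A^{**}$ isometrically and restricting the unique normal extension of $\phi$, but verifying strict continuity of that restriction requires essentially the same input.
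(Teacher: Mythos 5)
Your argument is correct and follows essentially the same route as the paper's proof (due to C.~Anantharaman): injectivity of the restriction map from strict density of $A$ in $M(A)$, and surjectivity via Cohen's factorization theorem, writing $\varphi=\psi a$ and extending by $\tilde\varphi(m)=\psi(ma)$. Your version merely spells out the approximate-unit computation for density and adds the (correct) verification that the identification is isometric, which the paper leaves implicit.
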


\begin{proof} (due to C. Anantharaman) The restriction map $i^*: M(A)^*_{\rm strict}\ra A^*$ is well defined because $i$ is continuous and it is injective because $A$ is dense in $M(A)_{\rm strict}$.
Its surjectivity is immediate from Cohen's factorization theorem: given $\varphi\in A^*$, there exist $\psi\in A^*$ and $a\in A$ such that $\varphi=\psi a$. Therefore, we can define the extension $\tilde\varphi$ by $\tilde\varphi(T)=\psi(aT)$ for $T\in M(A)$.
\end{proof}

We can now state and prove our main theorem.

\begin{theorem} \label {main} Let $(G,\lambda)$ be a $\sigma$-compact locally compact groupoid with Haar system. The following conditions are equivalent:
\begin{enumerate}
\item there exists a Borel approximate invariant mean;
\item there exists a topological invariant mean;
\item there exists a  topological approximate invariant mean.
\end{enumerate}
\end{theorem}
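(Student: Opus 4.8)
The plan is to prove the cycle $(iii)\Rightarrow(i)\Rightarrow(ii)\Rightarrow(iii)$. The implication $(iii)\Rightarrow(i)$ is immediate: by \propref{topological mean and density} and \propref{Borel density and mean} it suffices to observe that a topological approximate invariant density is a Borel one, since each $g_n\in{\mathcal C}_c(G)^+$ is continuous (hence Borel), so $x\mapsto\int g_n\,d\lambda^x$ and $\gamma\mapsto\int|g_n(\gamma^{-1}\gamma_1)-g_n(\gamma_1)|\,d\lambda^{r(\gamma)}(\gamma_1)$ are Borel, and uniform convergence on compact sets is stronger than pointwise convergence; here the continuous Haar system $\lambda$ is, a fortiori, a Borel Haar system.

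For $(i)\Rightarrow(ii)$ I would start from a Borel approximate invariant density $(g_n)$, furnished by \propref{Borel density and mean}. Each $g_n$ is a non-negative Borel function with $\int g_n\,d\lambda^x\le1$; by the description of ${\mathcal E}^*$ in \propref{dual} (with $X=G^{(0)}$, $Y=G$, $\pi=r$, $\alpha=\lambda$), every $\nu\in{\mathcal E}^*$ is a complex Borel Radon measure dominated by a multiple of some $\mu\circ\lambda$ with $\mu$ finite, so $g_n$ is $\nu$-integrable and $\nu\mapsto\int_G g_n\,d\nu$ defines a positive element $\theta_n\in{\mathcal E}^{**}$ with $\|\theta_n\|\le1$. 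Let $m$ be a weak*-cluster point of $(\theta_n)$ in the unit ball of ${\mathcal E}^{**}$. Then $\|m\|\le1$ and positivity pass to $m$; for a probability measure $\mu$ on $G^{(0)}$ one has $\theta_n(\mu\circ\lambda)=\int\!\big(\int g_n\,d\lambda^x\big)d\mu(x)\to1$ by dominated convergence (the integrands are $\le1$ and tend to $1$ by condition $(ii)$ of the density), whence $m(\mu\circ\lambda)=1$; and a Fubini computation together with the change of variable provided by left invariance gives
$$\theta_n\big({}^tL(f)\nu-{}^t(\lambda(f)\circ r)\nu\big)=\langle\, f*g_n-(\lambda(f)\circ r)g_n,\ \nu\,\rangle ,$$
whose modulus is bounded by a constant times $\sup_{x}\int_{G^x}|f(\gamma)|\Big(\int|g_n(\gamma^{-1}\gamma_1)-g_n(\gamma_1)|\,d\lambda^{r(\gamma)}(\gamma_1)\Big)d\lambda^x(\gamma)$, which tends to $0$ by condition $(iii)$ and dominated convergence (the inner integral is $\le2$ and $f$ is compactly supported). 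Hence $f*m=(\lambda(f)\circ r)m$ for all $f\in C_c(G)$, so $m$ satisfies the three conditions of \defnref{mean} and is a topological invariant mean.

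The implication $(ii)\Rightarrow(iii)$ is the crux, modelled on the characterization of amenable groups by Reiter's property. By \cite[Lemma 1.2.7]{adr:amenable}, the topological invariant mean $m$, lying in the positive part of the unit ball of ${\mathcal E}^{**}$, belongs to the weak*-closure of $j(\Lambda^+_1)$. Since $G$ is $\sigma$-compact, fix increasing sequences of compact sets $K_n$ with union $G$ and $L_n$ with union $G^{(0)}$; it suffices to produce, for each $n$, a $g_n\in\Lambda^+_1$ with $\int g_n\,d\lambda^x>1-1/n$ for $x\in L_n$ and $\int|g_n(\gamma^{-1}\gamma_1)-g_n(\gamma_1)|\,d\lambda^{r(\gamma)}(\gamma_1)<1/n$ for $\gamma\in K_n$, and then to invoke \propref{topological mean and density}. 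Fix $K=K_n$, $L=L_n$, $\varepsilon=1/n$, together with a suitable finite set $F\subset C_c(G)^+$ (chosen as an approximate unit localized near $K$), and consider in the Banach space $C(L)\oplus\bigoplus_{f\in F}{\mathcal E}$ the convex set
$$C=\Big\{\big((1-\lambda(g))|_L,\ (f*g-(\lambda(f)\circ r)g)_{f\in F}\big)\ :\ g\in\Lambda^+_1\Big\}.$$
I claim $0\in\overline{C}$. If not, Hahn-Banach separation yields $\delta>0$ and a functional which, by the Riesz description of $C(L)^*$ and by \propref{dual} for ${\mathcal E}^*$, is a pair $(\mu_0,(\nu_f)_{f\in F})$ with $\mu_0$ a finite complex measure on $L$ and $\nu_f\in{\mathcal E}^*$; transposing the convolution and multiplication operators, the separation inequality becomes $\mathrm{Re}\big(\mu_0(L)+j(g)(\eta)\big)\ge\delta$ for all $g\in\Lambda^+_1$, where
$$\eta:=-\mu_0\circ\lambda+\sum_{f\in F}\big({}^tL(f)\nu_f-{}^t(\lambda(f)\circ r)\nu_f\big)\in{\mathcal E}^*.$$
Passing to the weak*-limit along a net in $\Lambda^+_1$ converging to $m$ gives $\mathrm{Re}\big(\mu_0(L)+m(\eta)\big)\ge\delta$. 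But $m(\mu_0\circ\lambda)=\mu_0(L)$ by condition $(ii)$ of \defnref{mean} (decompose $\mu_0$ as a complex combination of probability measures on $L$), while $\sum_{f\in F}\big((f*m)(\nu_f)-((\lambda(f)\circ r)m)(\nu_f)\big)=0$ by condition $(iii)$; hence $\mu_0(L)+m(\eta)=0$ and $0\ge\delta>0$, a contradiction. Therefore $0\in\overline{C}$: for every $\varepsilon'>0$ there is $g\in\Lambda^+_1$ with $\sup_L(1-\lambda(g))<\varepsilon'$ and $\max_{f\in F}\|f*g-(\lambda(f)\circ r)g\|<\varepsilon'$.

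The main obstacle is the final step: to pass from smallness of the finitely many convolution defects $\|f*g-(\lambda(f)\circ r)g\|$ (with $\lambda(g)$ close to $1$ on $L$) to the pointwise bound $\sup_{\gamma\in K}\int|g(\gamma^{-1}\gamma_1)-g(\gamma_1)|\,d\lambda^{r(\gamma)}(\gamma_1)<\varepsilon$ demanded by \defnref{topological approximate density}, uniformly over $g\in\Lambda^+_1$. This is where $F$ must be chosen with care and where Buck's results, recalled in \lemref{buck1} and \lemref{buck2}, are used: on norm-bounded sets the strict topology agrees with uniform convergence on compact sets, and the strict dual of a multiplier algebra $C_b$ is the dual of $C_0$; these let one move between the ``uniform on compact sets'' formulation of topological amenability and the functional-analytic statements above, and handle the possible non-separability of $C_c(G)$. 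Once this is done, a diagonal extraction over the countable families $K_n$, $L_n$, $F_n$ produces the required sequence $(g_n)$, and \propref{topological mean and density} completes the proof of \thmref{main}.
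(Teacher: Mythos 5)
Your overall architecture is the paper's: the cycle $(\mathrm{iii})\Rightarrow(\mathrm{i})\Rightarrow(\mathrm{ii})\Rightarrow(\mathrm{iii})$, with $(\mathrm{i})\Rightarrow(\mathrm{ii})$ done by viewing each $g_n$ as a positive functional on ${\mathcal E}^*$ via \propref{dual} and taking a weak* cluster point, and $(\mathrm{ii})\Rightarrow(\mathrm{iii})$ done by a Hahn--Banach/Mazur argument on a convex set of convolution defects (your separation formulation is an equivalent packaging of the paper's ``same closed convex hull in two topologies with the same dual''). Those parts are sound.

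The problem is that you have explicitly left open the one step that carries the real content of $(\mathrm{ii})\Rightarrow(\mathrm{iii})$: passing from norm-smallness of finitely many defects $\|f*g-(\lambda(f)\circ r)g\|$ to the \emph{pointwise} bound $\sup_{\gamma\in K}\|g'_{|\gamma}\|_{r(\gamma)}\le\epsilon$ required by \defnref{topological approximate density}. Saying ``$F$ must be chosen with care'' and invoking Buck's lemmas does not do this; \lemref{buck1} and \lemref{buck2} only serve to identify the strict dual of $C_b(G^{(0)})$ with $C_0(G^{(0)})^*$ (so Mazur applies) and to convert strict convergence of the bounded net $\lambda(g_i)$ into uniform convergence on compacta --- they say nothing about the $\gamma$-pointwise defect. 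The actual mechanism is: (a) from the norm estimates \eqref{eq:5}, valid for \emph{all} $f\in{\mathcal C}_c(G)$, deduce via Stone--Weierstrass the analogous estimate \eqref{eq:6} for two-variable kernels $F(\gamma,\gamma_1)$ on $G^{(2)}_{r,r}$, and then, by a cut-off $h$ equal to $1$ on $K$, the estimate \eqref{eq:7} for the translated kernel $(\gamma,\gamma_1)\mapsto f(\gamma^{-1}\gamma_1)$; (b) replace $g_i$ by $g=f*g_i$ where $f\in{\mathcal C}_c(G)^+$ satisfies $\lambda(f)\equiv 1$ on $L\cup r(K)\cup s(K)$, and use the identity
$$g'_{|\gamma}=\int f(\gamma^{-1}\gamma_1)\,{g_i}'_{|\gamma_1}\,d\lambda^{r(\gamma)}(\gamma_1)-\int f(\gamma_1)\,{g_i}'_{|\gamma_1}\,d\lambda^{r(\gamma)}(\gamma_1),$$
which converts the pointwise defect of $g$ at each $\gamma\in K$ into the two integrated defects of $g_i$ controlled by \eqref{eq:7} and \eqref{eq:5}, while $\lambda(f*g_i)\ge 1-\epsilon$ on $L$ follows from \eqref{eq:4}. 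Note also that step (a) genuinely needs the estimates for infinitely many test functions (the Stone--Weierstrass approximants), so your restriction to a finite set $F$ fixed in advance cannot work as stated; the paper instead takes the product over all of ${\mathcal C}_c(G)$ and only afterwards extracts, for each $(L_n,K_n,\epsilon_n)$, a single $g_n$. Without (a) and (b) the proof of $(\mathrm{ii})\Rightarrow(\mathrm{iii})$ is incomplete.
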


 \par
\mk
\begin{proof}
The proof is constructed along the same lines as in the classical case of a locally compact group (see \cite[Theorem G.3.1]{bhv:T}). The structure of the proof is $({\rm i})\Rightarrow ({\rm ii})\Rightarrow ({\rm iii})\Rightarrow ({\rm i})$. The last implication is trivial.
\vskip 3mm
$({\rm i})\Rightarrow ({\rm ii})$
 First note that any Borel function $g$ on $G$ such that $\int|g|d\lambda^x$ is bounded defines a bounded linear form $m_g$ on ${\mathcal E}^*$ according to the formula
$$\langle m_g,\varphi(\mu\circ\lambda)\rangle =\int g\varphi d(\mu\circ\lambda)$$
where $\mu$ is a bounded positive measure on $X$ and $\varphi\in L^\infty(G,\mu\circ\alpha)$ as in \propref{dual}. 
Indeed, according to Fubini's theorem, the integral is well-defined and depends only on the measure
 $\nu=\varphi(\mu\circ\lambda)$. Moreover, $\|m_g\|=\sup_x\int|g|d\lambda^x$. Let $(g_n)$ be a Borel approximate invariant mean. We have $\|m_{g_n}\|\le 1$. Let $m$ be a cluster point of the sequence $(m_n=m_{g_n})$ in ${\mathcal E}^{**}$ endowed with the weak* topology. I claim that $m$ is a topological invariant mean. Condition $({\rm i})$ of \defnref{mean} clearly holds. Let us check $({\rm ii})$. We have
$$m(\mu\circ\lambda)=\lim_n\int(\int g_nd\lambda^x)d\mu(x)=1$$
by Lebesgue dominated convergence theorem. Let us check $({\rm iii})$. Let $f\in {\mathcal C}_c(G)$. For $\nu=\varphi(\mu\circ\lambda)$ in ${\mathcal E}^*$, 
$$\begin{array}{ccl}
\langle f*m_n-(\lambda(f)\circ r)m_n,\nu\rangle &=&\langle \nu,f*g_n-(\lambda(f)\circ r )g_n\rangle \\
&=&\int f(\gamma)\big(\int\varphi(\gamma_1)g'_n(\gamma,\gamma_1)d\lambda^{r(\gamma)}(\gamma_1)\big) d(\mu\circ\lambda)(\gamma)
\end{array}$$
The integrand goes to 0 pointwise and is majorized by the integrable function $2\|\varphi\|_\infty |f|$. Therefore, this quantity goes to zero, which gives $({\rm iii})$. 
\vskip 3mm
$({\rm ii})\Rightarrow ({\rm iii})$ Let us denote by $f\mapsto m_f$ the canonical embedding of ${\mathcal E}$ into ${\mathcal E}^{**}$. Let $m$ be a topological invariant mean. Since the image of  $\Lambda^+_1$  is weak* dense in the positive part of the unit ball of ${\mathcal E}^{**}$, there exists a net $(g_i)$ in $\Lambda^+_1$ such that $(m_i=m_{g_i})$ tends to $m$ in the weak* topology. By construction, the net $(g_i)$ satisfies:
\begin{equation}
\label{eq:1} 
\forall x\in G^{(0)}\quad g_i\ge 0\quad{\rm and}\quad \int g_i d\lambda^x\le 1
\end{equation}

 It also satisfies:
\begin{equation}
\label{eq:2}  
\lambda(g_i)\to 1\quad \hbox{in the topology}\quad  \sigma(C_b(G^{(0)}), C_0(G^{(0)})^*)
\end{equation}
 Indeed, let $\mu$ be a probability measure $\mu$ on $G^{(0)}$. Then $\int \lambda(g_i)d\mu=m_i(\mu\circ\lambda)$ goes to $m(\mu\circ\lambda)=1$.
\\
Finally, let us show that the net $(g_i)$ satisfies
\begin{equation}
\label{eq:3} 
\quad\forall f\in {\mathcal C}_c(G),\quad f*g_i-(\lambda(f)\circ r) g_i\to 0\quad \hbox{in}\quad  \sigma({\mathcal E}, {\mathcal E}^*)
\end{equation} 
Indeed, let $\nu\in {\mathcal E}^*$. Then
$$\langle f*g_i-(\lambda(f)\circ r) g_i, \nu\rangle =\langle f*m_i-(\lambda(f)\circ r) m_i, \nu\rangle $$
tends to $\langle f*m-(\lambda(f)\circ r)m,\nu\rangle =0$.
\\
We endow ${\mathcal E}_0 := C_b(G^{(0)})$ with the strict topology and for each $f\in {\mathcal C}_c(G)$, we define ${\mathcal E}_f :={\mathcal E}$ and equip it with the norm topology. We equip the product space
$${\mathcal F}={\mathcal E}_0\times \Pi_{f\in {\mathcal C}_c(G)}{\mathcal E}_f$$
with the product topology. Then ${\mathcal F}$ is a locally convex space. We also consider the product space
$${\mathcal F}_w={\mathcal E}_{0,w}\times \Pi_{f\in {\mathcal C}_c(G)}{\mathcal E}_{f,w}$$
where ${\mathcal E}_{0,w}$ is equipped with the topology $\sigma(C_b(G^{(0)}), C_0(G^{(0)})^*)$ and ${\mathcal E}_{f,w}:= {\mathcal E}_{w}$ is equipped with the weak topology. Consider the following convex subset of $\mathcal F$:
$$C=\{(\lambda(g), (f*g-(\lambda(f)\circ r)g)_{f\in {\mathcal C}_c(G)}), g\in \Lambda^+_1\}.$$
Properties \eqref{eq:1},\eqref{eq:2} and \eqref{eq:3} say that the element $(1, (0)_{f\in {\mathcal C}_c(G)})$ belongs to the closure of $C$ in ${\mathcal F}_w$. According to \lemref{buck2}, the locally convex spaces ${\mathcal E}_0$ and ${\mathcal E}_{0,w}$ have the same continuous linear functionals. This also holds classically for the spaces ${\mathcal E}$ and ${\mathcal E}_w$. This remains true for the product spaces ${\mathcal F}$ and ${\mathcal F}_w$. Therefore, according to a corollary of the Hahn-Banach theorem, the closure of the convex set $C$ is the same in both spaces. This implies the existence of a net, which we still call $(g_i)$, in $\Lambda^+_1$ such that $\lambda(g_i)$ tends to 1 in the strict topology of  $C_b(G^{(0)})$ and such that for every $f\in {\mathcal C}_c(G)$, $\sup_x\int |f*g_i-\lambda(f)\circ r)g_i|d\lambda^x$ goes to 0. Since, as we have seen in \lemref{buck1}, the strict topology coincides with the topology of uniform convergence on compact sets, the first condition may be written as:\begin{equation}
\label{eq:4}  
\int g_i d\lambda^x\to 1\quad \hbox{uniformly on compact subsets of}\,\,  G^{(0)}
\end{equation}
We may write the second condition as
\begin{equation}
\label{eq:5} 
\forall f\in {\mathcal C}_c(G)\quad \|\int f(\gamma_1){g'_i}_{|\gamma_1}d\lambda^x(\gamma_1)\|_x\to 0\quad \hbox{uniformly on}\,\,  G^{(0)}
\end{equation}
where $\|.\|_x$ is the norm of $L^1(G^x,\lambda^x)$. Let us show that \eqref{eq:5} implies an apparently stronger condition: 
\begin{equation}
\label{eq:6}  
\forall F\in {\mathcal C}_c(G_{r,r}^{(2)})\quad\|\int F(\gamma,\gamma_1){g'_i}_{|\gamma_1}d\lambda^{r(\gamma)}(\gamma_1)\|_{r(\gamma)}\to 0\quad \hbox{uniformly on}\,\,  G
\end{equation}
This is clear when $F$ is of the form $f_1\otimes f_2$, where $f_1,f_2\in {\mathcal C}_c(G)$. Let $F\in C_c(U_1*U_2)$, where   $U_1,U_2$ are relatively compact open Hausdorff subsets of $G$ and $U_1*U_2=(U_1\times U_2)\cap G_{r,r}^{(2)}$. Given $\epsilon>0$, according to the Stone-Weierstrass theorem, there exists a function $\underline F \in C_c(U_1*U_2)$ of the form $\sum_{k=1}^n f_{1,k}\otimes f_{2,k}$, where $f_{i,k}\in C_c(U_i)$ such that, for all $(\gamma,\gamma_1)\in U_1*U_2$, $|F(\gamma,\gamma_1)-\underline F(\gamma,\gamma_1)|\le\epsilon$. We have for all $\gamma\in U_1$:
$$\begin{array}{cl}
\|\int (F(\gamma,\gamma_1)-{\underline F}(\gamma,\gamma_1)){g'_i}_{|\gamma_1}d\lambda^{r(\gamma)}(\gamma_1)\|_{r(\gamma)}&\le \epsilon\int_{U_2} \|{g'_i}_{|\gamma_1}\|_{r(\gamma)}d\lambda^{r(\gamma)}(\gamma_1)\\
&\le 2\epsilon \lambda^{r(\gamma)}(U_2)\\
&\le 2M\epsilon
\end{array}$$
where $M=\sup_{x\in r(U_1)}\lambda^x(U_2)$ is finite because of the continuity of the Haar system and the relative compactness of $U_2$ and $r(U_1)$. This inequality holds for all $\gamma\in G$ when we replace $F$ [resp. ${\underline F}$] by its extension $\tilde F$ [resp. $\tilde{\underline F}$]  by 0 outside $U_1*U_2$. Combining this inequality with the convergence result for $\tilde{\underline F}$, we obtain the desired convergence for $\tilde F$. Since an arbitrary element of ${\mathcal C}_c(G_{r,r}^{(2)})$ is a linear combination of such functions $\tilde F$, we obtain \eqref{eq:6}.\\
Next, we would like to show the following property:
\begin{equation}
\label{eq:7}
\forall f\in {\mathcal C}_c(G)\quad\|\int f(\gamma^{-1}\gamma_1){g'_i}_{|\gamma_1}d\lambda^{r(\gamma)}(\gamma_1)\|_{r(\gamma)}\to 0
\end{equation}
uniformly on compact subsets of $G$. This cannot be derived directly from \eqref{eq:6} because the function sending $(\gamma,\gamma_1)$ in $G_{r,r}^{(2)}$ to $f(\gamma^{-1}\gamma_1)$ does not have compact support. However, one can proceed as follows. Let $K$ be a compact subset of $G$. Since every element of $G$ has an open neighborhood contained in a compact set having a Hausdorff neighborhood, $K$ is contained in a finite union of compact subsets $K_1, K_2, \ldots, K_l$ having Hausdorff open neighborhoods $U_1, U_2, \ldots, U_l$. There exists for each $j=1,\ldots, l$ a function $h_j\in C_c(U_j)$ such that  $0\le h_j\le 1$ and $h_j(\gamma)=1$ for all $\gamma\in K_j$. Then $h=\sum_{j=1}^l \tilde h_j$ belongs to ${\mathcal C}_c(G)$ and we can define $F$ on $G_{r,r}^{(2)}$ by $F(\gamma,\gamma_1)=h(\gamma) f(\gamma^{-1}\gamma_1)$. It belongs to ${\mathcal C}_c(G_{r,r}^{(2)})$ because $h\otimes f$ does and the map $(\gamma,\gamma_1)\mapsto (\gamma, \gamma^{-1}\gamma_1)$ is a homeomorphism of $G_{r,r}^{(2)}$ onto itself. For $\gamma\in \bigcup_{j=1}^lK_j$, we have
$$\|\int f(\gamma^{-1}\gamma_1){g'_i}_{|\gamma_1}d\lambda^{r(\gamma)}(\gamma_1)\|_{r(\gamma)}\le \|\int F(\gamma,\gamma_1){g'_i}_{|\gamma_1}d\lambda^{r(\gamma)}(\gamma_1)\|_{r(\gamma)}$$
and by \eqref{eq:6}, the left hand side tends to 0 uniformly. This gives \eqref{eq:7}.
\vskip 2mm
Suppose that we are given a compact subset $L$ of $G^{(0)}$, a compact subset $K$ of $G$ and $\epsilon>0$. We are going to construct $g\in\Lambda^+_1$ such that
\begin{equation}
\label{eq:8}  
\forall x\in L,\quad 1-\|g_{|x}\|_x\le \epsilon
\end{equation}
\begin{equation}
\label{eq:9}  
\forall \gamma\in K,\quad \|g'_{|\gamma}\|_{r(\gamma)}\le \epsilon
\end{equation}
We choose $f\in {\mathcal C}_c(G)^+$ such that $\int f d\lambda^x=1$ for all $x$ in the compact subset $L'= L\cup s(K)\cup r(K)$.\\
According to \eqref{eq:4}, there exists $i_0$ such that for $i\ge i_0$,
$$\|{g_i}_{|x}\|_x\ge 1- \epsilon\qquad \forall x\in s({\rm supp}f).$$
According to \eqref{eq:7},  there exists $i\ge i_0$ such that for all $\gamma\in K$,
$$\max(\|\int f(\gamma^{-1}\gamma_1){g'_i}_{|\gamma_1}d\lambda^{r(\gamma)}(\gamma_1)\|_{{r(\gamma)}}, \|\int f(\gamma_1){g'_i}_{|\gamma_1}d\lambda^{r(\gamma)}(\gamma_1)\|_{{r(\gamma)}})\le \epsilon/2$$

Pick such an $i$ and consider the function $g=f*g_i$.  Then $g\in\Lambda^+_1$ and for all $x\in L$,
$$\begin{array}{ccl}
\|g_{|x}\|_x&=&\int g(\gamma_1)d\lambda^x(\gamma_1)\\
&=&\int\int f(\gamma)g_i(\gamma^{-1}\gamma_1)d\lambda^{r(\gamma_1)}(\gamma)d\lambda^x(\gamma_1)\\
&=&\int f(\gamma)\int g_i(\gamma^{-1}\gamma_1)d\lambda^{r(\gamma)}(\gamma_1)d\lambda^x(\gamma)\\
&=&\int f(\gamma)\int g_i(\gamma_1)d\lambda^{s(\gamma)}(\gamma_1)d\lambda^x(\gamma)\\
&\ge& (1-\epsilon)\int f(\gamma)d\lambda^x(\gamma)=1-\epsilon\end{array}$$
Thus, \eqref{eq:8} is realized. On the other hand, we have the following equality: for all $\gamma\in G_{L'}^{L'}$,
$$g'_{|\gamma}=\int f(\gamma^{-1}\gamma_1){g_i}'_{|\gamma_1}d\lambda^{r(\gamma)}(\gamma_1)-\int f(\gamma_1){g_i}'_{|\gamma_1}d\lambda^{r(\gamma)}(\gamma_1).$$
Thus, if $\gamma\in K$, we have $\|g'_{|\gamma}\|_{r(\gamma)}\le \epsilon$. Therefore \eqref{eq:9} is also realized.
\\
Since $G$ [resp. $G^{(0)}$] is locally compact and $\sigma$-compact, there exists an increasing sequence $(K_n)_{n\in\N}$ [resp. $(L_n)_{n\in\N}$] of compact subsets of $G$ [resp. $G^{(0)}$] such that $G=\bigcup_{n\in\N}K_n$ [resp. $G^{(0)}=\bigcup_{n\in\N}L_n$]. We also choose a sequence $(\epsilon_n)_{n\in\N}$ decreasing to 0. For every $n\in\N$, there exists $g_n$ in ${\mathcal C}_c(G)_1^+$ such that $1-\|{g_n}_{|x}\|_x\le \epsilon_n$ for all $x\in L_n$ and $\|{g_n}'_{|\gamma}\|_{r(\gamma)}\le \epsilon_n$ for all $\gamma\in K_n$. Then $(g_n)_{n\in\N}$ is a topological approximate invariant mean.
\end{proof}


Above theorem can be rephrased as:

\begin{corollary} Let $(G,\lambda)$ be a $\sigma$-compact locally compact groupoid with Haar system. Then $G$ is topologically amenable if and only if it is Borel amenable.
\end{corollary}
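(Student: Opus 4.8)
The plan is to deduce the corollary directly from \thmref{main} by unwinding the two relevant definitions; there is no new mathematical content, only bookkeeping. First I would note that condition (i) of \thmref{main} — the existence of a Borel approximate invariant mean — is, by \defnref{Borel amenable}, nothing but the assertion that $G$ is Borel amenable. So half of the matching is immediate.

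Next I would identify condition (iii) of \thmref{main} with topological amenability of $G$. Here one has to be a little careful about which of the two equivalent formulations of ``topological approximate invariant mean'' is meant: in \defnref{topologically amenable} it is a sequence $(m_n)$ of families $(m_n^x)$ of sub-probability measures on the fibers $G^x$, whereas after \propref{topological mean and density} the same name is attached (by the stated abuse of language) to a sequence $(g_n)$ in ${\mathcal C}_c(G)^+$ of Haar densities. These two notions are interchangeable by \propref{topological mean and density}, so either reading of (iii) is equivalent to the existence of a topological approximate invariant mean in the sense of \defnref{topologically amenable}, i.e. to $G$ being topologically amenable. (The passage from the net-valued definition of \cite[Definition 2.2.2]{adr:amenable} to the sequence-valued \defnref{topologically amenable}, and the normalization turning sub-probability families into probability families, both use $\sigma$-compactness of $G$ and are recorded in the discussion following \defnref{topologically amenable}.)

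Putting the two identifications together, the equivalence $({\rm i})\Leftrightarrow({\rm iii})$ furnished by \thmref{main} reads exactly ``$G$ is Borel amenable $\iff$ $G$ is topologically amenable,'' which is the statement of the corollary. There is no genuine obstacle in this deduction: all the analytic work — the Hahn--Banach argument on the closure of the convex set $C$, the use of Buck's strict-topology lemmas, and the approximation estimates \eqref{eq:6}--\eqref{eq:9} — is already contained in the proof of \thmref{main}, and the corollary is a pure restatement, included only to phrase the result in the language of amenability rather than of approximate invariant means.
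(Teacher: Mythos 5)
Your deduction is correct and matches the paper exactly: the paper offers no separate proof, introducing the corollary with ``Above theorem can be rephrased as,'' which is precisely your observation that condition (i) of \thmref{main} is \defnref{Borel amenable} and condition (iii) is \defnref{topologically amenable} (via \propref{topological mean and density} to reconcile the two uses of ``topological approximate invariant mean''). Nothing further is needed.
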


\section{Examples and applications}

\subsection{Applications}

Applications of amenability to operator algebras are well-known. The main results are that the full and the reduced C*-algebras of a locally compact groupoid $G$ endowed with a Haar system, denoted respectively $C^*(G)$ and $C^*_r(G)$,  coincide when the groupoid is amenable and that $C^*(G)$ is nuclear. In \cite[Chapter 4]{adr:amenable}, these results are established for second countable Hausdorff locally compact groupoids and use only Borel amenability (in fact, the weaker condition of measurewise amenability suffices); they rely on a theorem of disintegration of representations. They are valid along with their proofs for non-Hausdorff groupoids as well. On the other hand, topological amenability of $G$ provides an alternative proof of these results, at least in the Hausdorff case. Indeed it can then be expressed as the existence of a sequence $(h_n)$ of continuous positive type functions with compact support on $G$, with ${h_n}_{|G^{(0)}}\le 1$, which converges to 1 uniformly on compact subsets (see \cite[Proposition 2.2.13]{adr:amenable}). Since pointwise multiplication by a bounded continuous positive type function $h$ defines a completely positive linear map $m_h$ on $C^*(G)$ [resp. $C^*_r(G)$] to itself (see \cite[Theorem 4.1]{rw:Fourier}), one gets a sequence $(m_{h_n})$ of completely positive linear maps completely bounded by 1 converging to the identity in the point-norm topology. this provides an approximation property which implies both the equality of the full and the reduced norms and the nuclearity of $C^*(G)$ (see \cite[Th\'eor\`eme 4.9]{ad:syst} and \cite[Theorem 5.6.18]{bo:fda}). As shown by J.-L. Tu in \cite{tu:amenable}, topological amenability as expressed in \defnref{topological approximate density} gives directly the fact that an amenable locally compact $\sigma$-compact groupoid $G$ with a Haar system acts properly on a continuous field of affine euclidean spaces. This has two important consequences: $G$ satisfies the Baum-Connes conjecture \cite[Th\'eor\`eme 9.3]{tu:amenable} and $C^*(G)$ satisfies the Universal Coefficient Theorem \cite[Proposition 10.7]{tu:amenable}.
\subsection{Orbit equivalence}
One of the main properties of Borel [resp. topological] amenability is its invariance under Borel [resp. topological] equivalence of groupoids. The definition of Borel equivalence is given in \cite[Definition A.1.11]{adr:amenable}. Invariance under topological equivalence is established in \cite[Theorem 2.2.17]{adr:amenable}. The proof is easily adapted to the Borel case.
In their work on Cantor minimal systems, Giordano, Putnam and Skau have introduced a notion of topological orbit equivalence which we recall. Let us denote by $(X,R)$ an equivalence relation $R$ on a set $X$; we view an equivalence relation as a groupoid $R\subset X\times X$. We assume that $X$ is a topological space and that $R$ is a Borel subset of $X\times X$. Equivalence relations $(X,R)$ and $(\underline X,\underline R)$ are said to be topologically orbit equivalent if there exists a homeomorphism $\varphi^{(0)}: X\ra{\underline X}$ such that $\varphi(R)=\underline R$, where $\varphi(x,y)=(\varphi^{(0)}(x),\varphi^{(0)}(y))$. Since $\varphi$ is an isomorphism of Borel groupoids, topological orbit equivalence preserves Borel amenability. This remains true for Kakutani equivalence as defined in \cite{gps:orbit equivalence}.  It turns out that the equivalence relation $(X,R)$ associated with a Cantor minimal system has several topologies  which turn $R$ into an \'etale locally compact groupoid. However, the underlying Borel structure is necessarily the Borel structure inherited from $X\times X$. Therefore, if one these \'etale groupoid is topologically amenable, then according to \thmref{main}, so are the others.  In particular, an equivalence relation which is {\it affable}, i.e. topologically orbit equivalent to an AF equivalence relation, is necessarily amenable. Since for \'etale Hausdorff locally compact groupoids, topological amenability is equivalent to the nuclearity of the (reduced) C*-algebra (\cite[Theorem 5.6.18]{bo:fda}), either all the associated C*-algebras are nuclear or none is nuclear.

\subsection{Singly generated dynamical systems}
We define a {\it singly generated dynamical system} (SGDS) as in \cite[Definition 2.3]{ren:cuntz}. It is  a
pair $(X,T)$ where $X$ is a topological space and
$T$ is a local homeomorphism from an open subset $\operatorname{dom}(T)$ of $X$ onto an open subset $\operatorname{ran}(T)$ of $X$. They are quite common dynamical systems, which appear either directly (e.g. one-sided subshifts of finite type) or as canonical extensions (see \cite{tho:kms, ds:interval}). The case where both the domain and the range of $T$ are strictly included in $X$ is found in graph and higher-rank graphs algebras (see for example \cite{rswy:higher}). The {\it semi-direct product groupoid} of a SGDS $(X,T)$ is defined (\cite[Definition 2.4]{ren:cuntz}) as:
$$G(X,T)=\{(x,m-n,y): m,n\in {\bf N},  x\in \operatorname{dom}(T^m), y\in \operatorname{dom}(T^n), T^mx=T^ny\}$$
with the groupoid structure induced by the product structure
of the trivial groupoid $X\times X$ and of the group ${\bf Z}$ and the topology defined
by the basic open sets
$${\mathcal U}(U;m,n;V)=\{(x,m-n,y): (x,y)\in U\times V,\ T^m(x)=T^n(y)\}$$ 
where $U$  [resp.
$V$] is an open subset of the domain of $T^m$ [resp. $T^n$] on
which $T^m$ [resp. $T^n$] is injective.

\begin{proposition}\label{SGDS}(cf. \cite[Proposition 2.9.(i)]{ren:cuntz}) Let $(X,T)$ be a SGDS where $X$ is locally compact, second countable and Hausdorff. Then its semi-direct groupoid $G(X,T)$ is topologically amenable.\end{proposition}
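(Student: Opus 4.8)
The plan is to exhibit an explicit Borel approximate invariant mean on $G(X,T)$ and then invoke \thmref{main} (via its \corref{}, or directly) to conclude topological amenability. Since $X$ is locally compact, second countable and Hausdorff, the semi-direct product groupoid $G(X,T)$ is a second countable locally compact Hausdorff \'etale groupoid; in particular it is $\sigma$-compact and carries the counting-measure Haar system $\lambda^x$ on each fiber $G(X,T)^x = r^{-1}(x)$. So it suffices to produce, for each $n$, a Borel family $(m_n^x)$ of subprobability measures on the fibers, Borel in $x$, with $\|m_n^x\|_1 \to 1$ pointwise and $\|\gamma m_n^{s(\gamma)} - m_n^{r(\gamma)}\|_1 \to 0$ pointwise.

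The key structural observation is that the fiber $G(X,T)^x$ over a point $x$ is the ``tree of the dynamics'': an element of $G(X,T)^x$ is a triple $(x, m-n, y)$ with $T^m x = T^n y$, so the fiber is parametrized by the pairs $(m,n)\in\N^2$ together with a choice of preimage $y$ of $T^m x$ under $T^n$. Because $T$ is a local homeomorphism on a second countable space, the number of such preimages at level $n$ is at most countable, and this ``preimage tree'' has the local structure of a forest on which one can run a F\o lner-type averaging along the $\Z$-direction. Concretely, I would fix the positive axis and set
$$m_n^x = \frac{1}{n}\sum_{k=0}^{n-1}\,\text{(uniform sub-probability on the set }\{(x, k, y): T^k y \text{ defined and } = T^k x,\ \ldots\}\text{)},$$
choosing the normalization so that masses stay $\le 1$; the point is that translating by a fixed $\gamma=(x', p-q, x)$ shifts the index $k$ by the bounded amount $p-q$ and relabels preimages within a bounded-depth subtree, so the $\ell^1$-difference of $m_n^{s(\gamma)}$ and $m_n^{r(\gamma)}$ is $O(1/n)$, exactly as in the F\o lner argument for $\Z$. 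Borel measurability in $x$ follows because all the preimage data $\{y : T^n y = T^m x\}$ depend in a Borel (indeed, on basic open sets $\mathcal U(U;m,n;V)$ continuous) way on $x$, using that $T$ is a local homeomorphism and $\operatorname{dom}(T^k)$ is open.

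The main obstacle is the bookkeeping when $\operatorname{dom}(T)$ and $\operatorname{ran}(T)$ are proper subsets of $X$: the ``tree'' over a point $x$ can be finite, or can terminate abruptly (if some $T^k x$ has no $T$-preimage, or if $x\notin\operatorname{dom}(T^k)$ for large $k$), so the naive uniform average over $k=0,\ldots,n-1$ may have small or even zero mass near such degenerate points, threatening condition (ii), $\|m_n^x\|_1\to 1$. The fix is the standard one used already in \propref{Borel density and mean}: one first builds the family only up to mass (using whatever part of the tree is available, which always contains at least the unit $(x,0,x)$), obtains the approximate invariance, and then renormalizes $m_n^x \mapsto m_n^x/\|m_n^x\|_1$ — but one must check that $\|m_n^x\|_1$ is bounded away from $0$ (it is, since it is $\ge 1/n\cdot 1$ from the $k=0$ term is the wrong bound; rather one argues that for the averaging to even make sense one restricts to $x$ in a suitable Borel set and handles the finite-tree points separately, where amenability is automatic). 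I expect that, as in the group case, once the correct F\o lner sets in the preimage forest are identified, conditions (i)--(iii) of \defnref{Borel amenable} fall out by direct estimate, and the passage to topological amenability is then immediate from \thmref{main}.
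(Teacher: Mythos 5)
Your overall strategy---reduce to Borel amenability via \thmref{main} and build a Borel approximate invariant mean by F\o lner averaging along the $\Z$-direction combined with averaging over preimage trees---is the right one, and is close in spirit to the paper's argument. But as written there is a genuine gap, concentrated in the one estimate you defer. First, your measures are not well defined: the set $\{y:(x,k,y)\in G(X,T)\}$ is the increasing union $\bigcup_j T^{-(j)}(T^{k+j}x)$, which can be countably infinite even when each level is finite, so ``uniform sub-probability'' on it has no meaning; you must truncate at a level $(m,n)$ with $m-n=k$, and the truncation level is not canonical. Second, and more seriously, the claimed $O(1/n)$ bound only accounts for the shift of the cocycle value $k\mapsto k+(p-q)$; it does not control the mismatch of preimage sets at a \emph{fixed} cocycle value. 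Left translation by $\gamma=(x',p-q,x)$ carries the level-$(m,n)$ set $\{y:T^ny=T^mx\}$ into the level-$(m+p,n+q)$ set over $x'$, generally as a proper subset, so the pushforward of a normalized counting measure can be at $\ell^1$-distance close to $2$ from the corresponding measure over $x'$, at \emph{every} level. No Ces\`aro average over a single index $n$ repairs this: two independent limits are needed, one for the $\Z$-direction and one for the depth of the preimage trees.

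This is exactly what the paper's proof supplies. It factors the problem through the cocycle $c(x,k,y)=k$: the kernel $c^{-1}(0)$ is Borel amenable because it is an increasing union of \emph{proper} groupoids (each fixed-level equivalence relation $T^my=T^mx$ admits a genuinely invariant mean, not merely the counting measure on a level set), the reduction $H$ of the skew product $G(c)$ to $Y=\{(s(\gamma),c(\gamma))\}$ is equivalent to $c^{-1}(0)$, and the F\o lner averages $\mu_j^x$ over $c(G_x)\cap\{-j,\dots,j\}$ handle the $\Z$-direction; the resulting mean is doubly indexed, $m_{ij}^x=\int m_i^{(x,a)}\,d\mu_j^x(a)$. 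To complete your argument you would need to replace your uniform level measures by the invariant means coming from properness of the fixed-level relations (which also disposes of your worries about infinite preimage sets and about points where the tree terminates), and run the two limits separately. The renormalization trick from \propref{Borel density and mean} does handle condition (ii) of \defnref{Borel amenable} once the rest is in place, but it cannot substitute for the missing invariance estimate.
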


\begin{proof} As observed by D. Williams, the proof given in \cite[Proposition 2.4]{ren:cuntz} is not valid when $\operatorname{ran}(T)$ is strictly contained in $X$. Using the same idea and taking advantage of our main theorem, we will establish topological amenability in the general case. There are two groupoids associated with the fundamental cocycle $c:G(X,T)\ra\Z$, given by $c(x,k,y)=k$. Namely, its kernel $c^{-1}(0)$, which is a closed subgroupoid of $G=G(X,T)$ and the skew-product $G(c)$. We use here the convention of \cite[Definition I.1.6]{ren:approach}). Thus  $G(c)$ is defined as $G\times\Z$ with unit space is $X\times\Z$. The range and source maps are respectively given by $r(\gamma,a)=(r(\gamma),a)$ and $s(\gamma, a)=(s(\gamma),a+c(\gamma)$. Two elements $(\gamma,a)$ and $(\gamma',b)$ of $G(c)$ are composable if and only if $\gamma$ and $\gamma'$ are composable and $b=a+c(\gamma)$; then their product is $(\gamma, a)(\gamma',b)=(\gamma\gamma', a)$. Let us introduce the subspace $Y=\{(s(\gamma), c(\gamma)): \gamma\in G\}$ of the unit space $X\times\Z$ of $G(c)$. It is open, because for all $k\in\Z$, $c^{-1}(k)$ is open and the source map is open, and invariant under $G(c)$. Let $H$ be the reduction of $G(c)$ to $Y$. Then $H$ and $c^{-1}(0)$ are topologically equivalent. The equivalence is implemented by $G$, endowed with the natural left action of $c^{-1}(0)$ and the right action of $H$ given by $z(\gamma, c(z))=z\gamma$.  The kernel $c^{-1}(0)$  is Borel amenable because it is an increasing union of proper Borel groupoids. According to \cite [Theorem 2.2.17]{adr:amenable} (or rather, a Borel version of it), $H$ is also Borel amenable. Then we proceed as in \cite [Proposition II.3.8]{ren:approach} to show that $G(X,T)$ is Borel amenable.  The space $Y$ is endowed with the left action of $G$ defined by the first projection $p: Y\ra X$ as anchor map and the formula $\gamma(s(\gamma), a+c(\gamma))=(r(\gamma), a)$. The $G$-map $p$ is Borel amenable in the sense of \cite[Definition 2.2.2]{adr:amenable}, or rather a Borel version of this definition. This means the existence of a sequence $(\mu_j)_{j\in\N}$, where each $\mu_j$ is a family $(\mu_j^x)_{x\in X}$ of probability measures $\mu_j^x$ on $c(G_x)$ such that for all bounded Borel functions $f$ on $Y$, $x\mapsto \int f(x,a)d\mu_j^x(a)$ is Borel and for all $\gamma\in G$, 
$\|\gamma \mu_j^{s(\gamma)}-\mu_j^{r(\gamma)}\|_1$ tends to $0$. Explicitly, we can take
$$\mu_j^x=\frac{1}{|c(G_x)\cap\{-j,\ldots, j\}|}\sum_{a\in c(G_x)\cap\{-j,\ldots, j\}} \delta_a$$
because the subsets $c(G_x)$ are either $\Z$, semi-infinite intervals $\{a(x), \dots\}$ or finite intervals $\{a(x),\ldots, b(x)\}$. The Borel amenability of $H$ is exactly the Borel amenability of the $G$-map $r:H\ra Y$, where the action of $G$ on $H$ is given by $\gamma(\gamma', a+c(\gamma))=(\gamma\gamma',a)$. A Borel version of \cite[Proposition 2.2.4]{adr:amenable} gives the Borel amenability of the $G$-map $p\circ r:H\ra X$. Since $H$ is a principal $G$-space, a Borel version of \cite[Corollary 2.2.10]{adr:amenable} gives the Borel amenability of $G$. One can also prove the result directly: let $(m_i^{(x,a)})_{(x,a)\in Y}$ be a Borel approximate invariant mean for $H$. By identifying $H^{(x,a)}$ and $G^x$, one can define the measures
$$m_{ij}^x=\int m_i^{(x,a)}d\mu_j^x(a).$$
It is routine to check that $(m_{ij})$ is a Borel approximate invariant mean for $G(X,T)$. 
\end{proof}

\begin{remark}
As pointed out in \cite{rswy:higher}, there is an alternative proof of this result: it is an immediate consequence of \cite[Proposition 9.3]{spi:gpd}. The above proof is more elementary in the sense that it does not use C*-algebras.
\end{remark}

\subsection{Groupoid bundles}

\begin{definition} A locally compact groupoid $G$ over $G^{(0)}=X$ is a {\it groupoid bundle} over a locally compact Hausdorff space $T$ if there exists a continuous open surjection $p:X\ra T$ which is invariant in the sense that $p\circ r=p\circ s$. \end{definition}

Then for all $t\in T$, $X(t)=p^{-1}(t)$ is a closed invariant subset.  We define here the groupoid $G(t)$ as the reduction $G_{|X(t)}$.\\

 Let $(G,\lambda)$ be a $\sigma$-compact locally compact groupoid with Haar system. We recall that $G$ is topologically amenable if and only if
given a compact subset $L$  of $G^{(0)}$, a compact subset $K$ of $G$ and $\epsilon>0$, there exists $g\in {\mathcal C}_c(G)^+$ such that
\begin{enumerate}
\item $\|g\|_x\le 1$ for all $x\in G^{(0)}$;
\item $\|g\|_x\ge 1-\epsilon$ for all $x\in L$;
\item $\|g'_{|\gamma}\|_{r(\gamma)}\le \epsilon$ for all $\gamma\in K$.
\end{enumerate}

We shall say that a function $g$ is {\it $(L,K,\epsilon)$-invariant} if it satisfies $({\rm i}), ({\rm ii})$ and $({\rm iii})$.\\

The proof of the theorem below relies essentially on the Tietze extension theorem. The version of this theorem for non-Hausdorff locally compact spaces which we need can be found in \cite{tu:non-hausdorff}. It is deduced from the classical Hausdorff case by using the same technique as in \propref{dual}.

\begin{lemma}\label{Tietze}\cite[Lemma 4.5]{tu:non-hausdorff} Let $Z$ be a closed subset of a non-Hausdorff locally space $Y$. Given $g\in {\mathcal C}_c(Z)^+$, there exists $f\in {\mathcal C}_c(Y)^+$ such that $g=f_{|Z}$.
\end{lemma}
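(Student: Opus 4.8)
The plan is to imitate the reduction to the Hausdorff case used in the proof of \propref{dual}. First I would fix a cover ${\mathcal U}=(U_i)_{i\in I}$ of $Y$ by open Hausdorff subsets and form the locally compact Hausdorff space $Y_{\mathcal U}=\sqcup_{i\in I}U_i$ together with the local homeomorphism $\pi_{\mathcal U}:Y_{\mathcal U}\ra Y$. Since $Z$ is closed in $Y$, the set $Z_{\mathcal U}:=\pi_{\mathcal U}^{-1}(Z)=\sqcup_{i\in I}(U_i\cap Z)$ is closed in $Y_{\mathcal U}$ and locally compact Hausdorff, and $\pi_{\mathcal U}$ restricts to a local homeomorphism $\pi_Z:Z_{\mathcal U}\ra Z$ whose fibre over any $z\in Z$ coincides with the fibre of $\pi_{\mathcal U}$. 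Exactly as in \propref{dual}, the fibrewise summation operator (pushforward) maps $C_c(Y_{\mathcal U})$ into ${\mathcal C}_c(Y)$ and $C_c(Z_{\mathcal U})$ into ${\mathcal C}_c(Z)$, preserves positivity, and commutes with restriction to $Z$, in the sense that $(\pi_{\mathcal U})_*(F)_{|Z}=(\pi_Z)_*(F_{|Z_{\mathcal U}})$ for $F\in C_c(Y_{\mathcal U})$.

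The heart of the matter is to lift $g$ to a \emph{non-negative} function $\hat g\in C_c(Z_{\mathcal U})^+$ with $(\pi_Z)_*(\hat g)=g$. For this I would cover the compact set $\operatorname{supp}(g)$ by finitely many of the locally compact Hausdorff open sets $V_i:=U_i\cap Z$ of $Z$, and use Urysohn's lemma inside each such $V_i$ to obtain functions $\psi_k\in C_c(V_{i_k})^+$ whose extensions by zero satisfy $u:=\sum_k\tilde\psi_k\ge 1$ on an open set $\Omega$ containing $\operatorname{supp}(g)$. Dividing by the everywhere-positive continuous function $\max(u,1)$ produces $\theta_k\in{\mathcal C}_c(Z)^+$ with $\sum_k\theta_k\equiv 1$ on $\Omega$. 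I would then transport each of the (compactly supported) functions $\psi_k/\max(u,1)$ to the corresponding copy of $V_{i_k}$ inside $Z_{\mathcal U}$ and multiply by the continuous pull-back $g\circ\pi_Z$; the outcome is $\hat g\in C_c(Z_{\mathcal U})^+$, and $(\pi_Z)_*(\hat g)=g\cdot\sum_k\theta_k$, which equals $g$ everywhere, since it does on $\Omega\supseteq\operatorname{supp}(g)$ and both sides vanish elsewhere.

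With the lift in hand the conclusion is routine. I would apply the classical Tietze extension theorem in the locally compact Hausdorff space $Y_{\mathcal U}$, followed by a Urysohn cut-off to keep compact support and, if needed, truncation at $0$ to keep positivity — both harmless in the Hausdorff setting — to obtain $F\in C_c(Y_{\mathcal U})^+$ with $F_{|Z_{\mathcal U}}=\hat g$. Then $f:=(\pi_{\mathcal U})_*(F)$ belongs to ${\mathcal C}_c(Y)^+$ by the remarks of the first paragraph, and $f_{|Z}=(\pi_{\mathcal U})_*(F)_{|Z}=(\pi_Z)_*(F_{|Z_{\mathcal U}})=(\pi_Z)_*(\hat g)=g$, as desired.

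The main obstacle I anticipate is the construction of the non-negative lift $\hat g$. A naive decomposition of $g\in{\mathcal C}_c(Z)^+$ into extensions by zero of $C_c$-functions on the Hausdorff pieces need not have non-negative summands, and restoring positivity by truncation would destroy membership in ${\mathcal C}_c$ (just as $|f|\notin{\mathcal C}_c(Y)$ in general); performing the truncation upstairs on the \emph{Hausdorff} space $Y_{\mathcal U}$ is precisely what makes things work, which is why the partition-of-unity lift above is needed. Everything else — the properties of $\pi_{\mathcal U}$, $\pi_Z$ and of pushforward, and the classical Tietze and Urysohn inputs — is standard.
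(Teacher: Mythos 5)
Your reduction scheme --- lift everything to the Hausdorff space $Y_{\mathcal U}$, extend there with positivity preserved, and push forward along $\pi_{\mathcal U}$ --- is exactly what the paper indicates (it offers no proof of its own, only the citation to Tu together with the remark that the lemma follows from the Hausdorff case by the technique of \propref{dual}), and your first and last paragraphs are sound. The gap is in the construction of the non-negative lift $\hat g$, which you correctly identify as the heart of the matter. Two continuity assertions there fail in the non-Hausdorff setting: (a) $\max(u,1)$ is not continuous on $Z$, and (b) $g\circ\pi_Z$ is not continuous on $Z_{\mathcal U}$. Both fail for the very reason you warn about in your closing paragraph: an element of ${\mathcal C}_c(Z)$, or a finite sum of extensions by zero such as $u=\sum_k\tilde\psi_k$, is in general discontinuous, and so is its restriction to a single Hausdorff chart. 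Concretely, if $z\in V_{i}$ has a non-separated partner $z'$ lying in $\operatorname{supp}\psi_k$ for a chart $V_{i_k}\not\ni z$, then along a net converging to both $z$ and $z'$ one gets $\liminf u\ge u(z)+\psi_k(z')>u(z)$; since your construction forces $u\ge 1$ on a neighbourhood of the support of $g$, the truncation $\max(u,1)$ does not absorb this jump, so $\theta_k=\psi_k/\max(u,1)$ is not continuous on $V_{i_k}$ and hence its extension by zero is not in ${\mathcal C}_c(Z)$; the same net shows that $g\circ\pi_Z$ jumps on the copy of $V_i$ whenever $g(z')\neq 0$. The line with two origins, with both origins in the support of $g$, already exhibits all of this. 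Consequently the proposed $\hat g$ is only upper semi-continuous, not an element of $C_c(Z_{\mathcal U})^+$, and the classical Tietze step cannot be applied to it.

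What is actually needed, and what your argument silently assumes, is that every $g\in{\mathcal C}_c(Z)$ with $g\ge 0$ can be written as $\sum_k\tilde g_k$ with each $g_k\in C_c(V_{i_k})^+$ --- equivalently, that the summation map carries $C_c(Z_{\mathcal U})^+$ \emph{onto} $\{f\in{\mathcal C}_c(Z):f\ge 0\}$. This is the genuinely nontrivial content hiding behind the lemma, and it cannot be obtained by a naive partition of unity, precisely because partitions of unity do not behave in ${\mathcal C}_c$ of a non-Hausdorff space as they do in the Hausdorff case. It must be argued directly (for instance by induction on the number of charts, redistributing mass between overlapping summands while keeping each one non-negative and continuous on its own chart, as one can check by hand for two charts on the doubled line); once such a positive decomposition is available, your final paragraph does complete the proof.
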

\vskip5mm

\begin{theorem}
Let $G$ be a $\sigma$-compact locally compact groupoid with continuous Haar system. Assume that $p:G\ra T$ is a groupoid bundle over a locally compact Hausdorff space $T$. Then \tfae
\begin{enumerate}
\item $G$ is topologically amenable;
\item for all $t\in T$, $G(t)$ is topologically amenable.
\end{enumerate}
\end{theorem}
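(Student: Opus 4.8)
The plan is to prove both implications separately, with the nontrivial direction being $({\rm ii})\Rightarrow({\rm i})$. For $({\rm i})\Rightarrow({\rm ii})$, I would argue that topological amenability of $G$ passes to the closed subgroupoid $G(t)=G_{|X(t)}$. This should be essentially routine: given an $(L,K,\epsilon)$-invariant function $g\in{\mathcal C}_c(G)^+$ for $G$, its restriction to $G(t)$ is a candidate invariant function for $G(t)$, since $X(t)$ is closed and invariant and the Haar system restricts; one needs only that $\|g_{|X(t)}\|_x$ and $\|(g_{|X(t)})'_{|\gamma}\|$ coincide with the corresponding quantities computed in $G$, which holds because $\lambda^x$ is supported on $G^x\subset G(t)$ for $x\in X(t)$. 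One must also check that compact subsets of $X(t)$ and of $G(t)$ are compact in $G^{(0)}$ and $G$ respectively, which is immediate.

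For $({\rm ii})\Rightarrow({\rm i})$, I would use the reformulation of topological amenability in terms of $(L,K,\epsilon)$-invariant functions stated just before \lemref{Tietze}. Fix a compact $L\subset G^{(0)}$, a compact $K\subset G$, and $\epsilon>0$. The set $p(L)$ is a compact subset of $T$. For each $t\in p(L)$, topological amenability of $G(t)$ produces a function $g_t\in{\mathcal C}_c(G(t))^+$ that is $(L\cap X(t),\,K\cap G(t),\,\epsilon/2)$-invariant in $G(t)$. By the non-Hausdorff Tietze extension theorem (\lemref{Tietze}), since $G(t)$ is closed in $G$, I can extend $g_t$ to $\tilde g_t\in{\mathcal C}_c(G)^+$. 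The key point is a continuity/openness argument: the conditions $\|\tilde g_t\|_x\ge 1-\epsilon$ (for $x$ near $L\cap X(t)$) and $\|{\tilde g_t}'_{|\gamma}\|_{r(\gamma)}\le\epsilon$ (for $\gamma$ near $K\cap G(t)$) and $\|\tilde g_t\|_x\le 1+\epsilon$ everywhere are open conditions in $x$ and $\gamma$ by continuity of $x\mapsto\int \tilde g_t\,d\lambda^x$ and of $\gamma\mapsto\|{\tilde g_t}'_{|\gamma}\|_{r(\gamma)}$ (the latter being continuous on ${\mathcal C}_c(G)$ because $L^1(G,\lambda)$ is an upper semi-continuous bundle and $\tilde g_t'$ lies in ${\mathcal C}_c$ — actually one must be slightly careful, invoking upper semicontinuity together with the bound, or the argument from \eqref{eq:6}). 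Hence for each $t\in p(L)$ there is an open neighborhood $V_t$ of $t$ in $T$ such that $\tilde g_t$ is "approximately $(L,K,\epsilon)$-invariant over $p^{-1}(V_t)$": it satisfies (ii) for $x\in L\cap p^{-1}(V_t)$ and (iii) for $\gamma\in K\cap p^{-1}(V_t)$, and is bounded near $1$.

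Then I would extract a finite subcover $V_{t_1},\dots,V_{t_m}$ of the compact set $p(L)$, choose a partition of unity $(\psi_k)$ on $T$ subordinate to it (possible since $T$ is locally compact Hausdorff; one may shrink to relatively compact neighborhoods first), and form $g=\sum_{k=1}^m (\psi_k\circ p\circ r)\,\tilde g_{t_k}$, which lies in ${\mathcal C}_c(G)^+$. On the fiber $G^x$ with $p(x)=t$, this reads $g_{|x}=\sum_{k:\,t\in V_{t_k}}\psi_k(t)\,(\tilde g_{t_k})_{|x}$, a convex combination; so $\|g_{|x}\|_x$ is a convex combination of the $\|(\tilde g_{t_k})_{|x}\|_x$, giving (ii) on $L$ and the upper bound (i) (after rescaling $g$ by a factor $1/(1+\epsilon)$ if needed, or by arranging the $\tilde g_{t_k}$ to satisfy (i) on the nose — Tietze lets us also impose $\tilde g_t\le$ something; alternatively normalize at the end). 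For (iii), since $\psi_k\circ p\circ r$ takes the value $\psi_k(p(r(\gamma)))=\psi_k(p(s(\gamma)))$ by $p$-invariance, the coefficients are unchanged under the left translation, so $g'_{|\gamma}=\sum_k\psi_k(p(r(\gamma)))\,(\tilde g_{t_k})'_{|\gamma}$ and the triangle inequality plus $\sum_k\psi_k\le1$ yields $\|g'_{|\gamma}\|_{r(\gamma)}\le\epsilon$ for $\gamma\in K$.

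The main obstacle is the second step: ensuring that the pointwise/fiberwise conditions obtained on the single fiber $G(t)$ extend to an open band $p^{-1}(V_t)$ around it. This requires the joint continuity (or at least upper semicontinuity in the right direction) of $\gamma\mapsto\|{\tilde g_t}'_{|\gamma}\|_{r(\gamma)}$ and $x\mapsto\|{\tilde g_t}_{|x}\|_x$ on $G$ and $G^{(0)}$; in the non-Hausdorff case $\tilde g_t'$ need not have "continuous absolute value," so one cannot simply say these maps are continuous. The resolution is the same device used to pass from \eqref{eq:5} to \eqref{eq:6}: approximate $\tilde g_t$ (and hence the relevant integrands) on relatively compact Hausdorff charts by finite sums $f_1\otimes f_2$ up to $\epsilon$, for which the maps are genuinely continuous, and absorb the error via a uniform bound $M=\sup_x\lambda^x(U_2)$ coming from continuity of the Haar system. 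Once this openness is in hand, the partition-of-unity gluing and the final normalization are routine.
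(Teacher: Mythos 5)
Your proposal is correct and follows essentially the same route as the paper: restriction to the closed invariant fiber for $({\rm i})\Rightarrow({\rm ii})$, and for the converse the Tietze extension (\lemref{Tietze}) of a fiberwise $(L\cap X(t),K\cap G(t),\epsilon/2)$-invariant function, upper semi-continuity of $\gamma\mapsto\|g'_{|\gamma}\|_{r(\gamma)}$ combined with compactness of $K$ to produce the neighborhoods $V_t$, and gluing by a partition of unity pulled back through the invariant map $p\circ r$. The point you flag as delicate --- propagating the fiberwise estimates to a band over $V_t$ --- is resolved in the paper exactly as you suggest, via upper semi-continuity of the bundle $r^*L^1(G,\lambda)$ together with the observation that $p\circ r(K\setminus U)$ is closed and misses $t$.
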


\begin{proof} $({\rm i})\Rightarrow ({\rm ii})$  Let $(m_n)$ be a topological approximate invariant mean for $G$. Fix $t\in T$. Then $(m^x_n)_{x\in X(t)}$ is a topological approximate invariant mean for $G(t)$.\\

$({\rm ii})\Rightarrow ({\rm i})$ Let $L$ compact subset of $X=G^{(0)}$, $K$ compact subset of $G$ and $\epsilon>0$ be given. Fix $t\in p(L)$. Since $G(t)$ is amenable, there exists $g_{|t}\in {\mathcal C}_c(G(t))^+$ satisfying the $(L\cap X(t), K\cap G(t),\epsilon/2)$-condition. According to \lemref{Tietze}, there exists $g\in{\mathcal C}_c(G)^+$ which extends $g_{|t}$. Because of the continuity of $x\mapsto\int g d\lambda^x$, we may scale $g$ so that  $\int g d\lambda^x\le 1$ for all $x\in X$ and $\int g d\lambda^x\ge 1-\epsilon$ for all $x\in L$. Since the function $\gamma\mapsto \|g'_{|\gamma}\|_{r(\gamma)}$ is upper semi-continuous (as we have seen earlier, the bundle $E=L^1(G,\lambda)$ is upper semi-continuous and so is the pull-back bundle $r^*E$ above $G$), the set
 $U=\{\gamma\in G: \|g_{|\gamma}\|_{r(\gamma)}<\epsilon\}$ is open. Since $K\setminus U$ is compact, $p\circ r(K\setminus U)$ is also compact hence closed in $T$. Moreover, this closed set does not contain $t$. Its complement is an open neighborhood $V_t$ of $t$ such that $\|g_{|\gamma}\|_{r(\gamma)}< \epsilon$ for all $\gamma\in K\cap (p\circ r)^{-1}(V_t)$. In summary, for every $t\in p(L)$, there exists an open neighborhhood $V_t$ of $t$ and a function  $g_t \in {\mathcal C}_c(G)^+$ which is $(L,K\cap(p\circ r)^{-1}(V_t),\epsilon)$-invariant.
\\
By compactness of $p(L)$, we obtain a finite open cover  $(V_i)_{i=1,\ldots,n}$ and for each $i=1,\ldots, n$ an $(L,K\cap(p\circ r)^{-1}(V_i),\epsilon)$-invariant function $g_i\in {\mathcal C}_c(G)^+$. Let $(h_1,\ldots, h_n)$ be a partition of unity subordinate to the cover. Define $g=\sum_{i=1}^n (h_i\circ p\circ r)g_i$. Then $g$ belongs to $C_c(G)^+$. It satisfies $({\rm i})$ and $({\rm ii})$. Since
 $g'_{|\gamma}=\sum_{i=1}^n (h_i\circ p\circ r)(\gamma)g'_{i|\gamma}$, we have $\|g'_{|\gamma}\|_{r(\gamma)}\le\sum_{i=1}^n (h_i\circ p\circ r)(\gamma) \|g'_{i |\gamma}\|_{|r(\gamma)}$. If $\gamma \in K$, we have $\|g'_{i |\gamma}\|_{|r(\gamma)}\le \epsilon$ for all $i$ such that $p\circ r(\gamma)$ belongs to $V_i$, hence $\|g'_{|\gamma}\|_{r(\gamma)}\le\epsilon$. Therefore, $g$ is $(L,K,\epsilon)$-invariant. 

\end{proof}

\begin{remark}
It would be interesting to have a Borel version of this theorem: a decomposition provided by a Borel invariant map $p: G^{(0)}\ra T$ is closer to the usual ergodic decomposition for measured groupoids than our topological version.
\end{remark}

\subsection{F\o lner sets and growth conditions}

Since \cite[Section 3.2.c]{adr:amenable} deals with measured groupoids rather than  Borel or topological groupoids, the following complements may be useful.

\begin{definition}\label{Folner}
Let $(G,\lambda)$ be a Borel groupoid endowed with a Borel Haar system. A {\it F\o lner sequence} is a sequence $(F_n)$ of Borel subsets of $G$ such that
\begin{enumerate}
\item for all $n$ and for all $x\in G^{(0)}$, $0<\lambda^x(F_n)<\infty$;
\item for all $\gamma\in G$, $\displaystyle \lim_n\frac{\lambda^{r(\gamma)}(\gamma F_n\Delta F_n)}{\lambda^{s(\gamma)}(F_n)}=0$.
\end{enumerate}
\end{definition}

\begin{remark}\label{Folner bis}
Just as in the case of groups, condition $({\rm ii})$ can be replaced by  

	 $({\rm ii}\, {\rm bis})$ for all $\gamma\in G$, $\displaystyle \lim_n\frac{\lambda^{r(\gamma)}(\gamma F_n\cap F_n)}{\lambda^{s(\gamma)}(F_n)}=1$. \\
Note that both conditions $({\rm ii})$ and $({\rm ii}\, {\rm bis})$ imply that  $\displaystyle \lim_n\frac{\lambda^{r(\gamma)}(F_n)}{\lambda^{s(\gamma)}(F_n)}=1$ for all $\gamma\in G$. 
\end{remark}

\begin{lemma}\label{estimate} Let $(G,\lambda)$ be a Borel groupoid endowed with a Borel Haar system. Let $F$ be a Borel subset of $G$ such that for all $x\in G^{(0)}$, $0<\lambda^x(F)<\infty$. Then the normalized characteristic function $g_F$ defined by 
$g_F(\gamma)=\frac{{\bf 1}_{F}(\gamma)}{\lambda^{r(\gamma)}(F)}$
satisfies for all $\gamma\in G$ the inequality
$$\int| g_F(\gamma^{-1}\gamma_1)-g_F(\gamma_1)|d\lambda^{r(\gamma)}(\gamma_1)\le 2\frac{\lambda^{r(\gamma)}(\gamma F\Delta F)}{\lambda^{s(\gamma)}(F)}.$$
\end{lemma}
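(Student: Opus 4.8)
The plan is to estimate the $L^1$-norm of the difference $g_F(\gamma^{-1}\gamma_1) - g_F(\gamma_1)$ directly, splitting the integral according to which of the two characteristic functions is nonzero. First I would note that $g_F(\gamma^{-1}\gamma_1) = \frac{{\bf 1}_F(\gamma^{-1}\gamma_1)}{\lambda^{r(\gamma^{-1}\gamma_1)}(F)}$, and since $r(\gamma^{-1}\gamma_1) = s(\gamma)$ whenever $(\gamma, \gamma_1) \in G^{(2)}_{r,r}$ (because $\gamma^{-1}$ has range $s(\gamma)$ and $\gamma^{-1}\gamma_1$ is defined exactly when $r(\gamma_1) = r(\gamma)$), this equals $\frac{{\bf 1}_{\gamma F}(\gamma_1)}{\lambda^{s(\gamma)}(F)}$, using that ${\bf 1}_F(\gamma^{-1}\gamma_1) = {\bf 1}_{\gamma F}(\gamma_1)$. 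So the integrand becomes $\left|\frac{{\bf 1}_{\gamma F}(\gamma_1)}{\lambda^{s(\gamma)}(F)} - \frac{{\bf 1}_F(\gamma_1)}{\lambda^{r(\gamma)}(F)}\right|$, and we integrate $d\lambda^{r(\gamma)}(\gamma_1)$.

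Next I would decompose the domain of integration into the three disjoint Borel pieces $\gamma F \setminus F$, $F \setminus \gamma F$, and $\gamma F \cap F$. On $\gamma F \setminus F$ the integrand is $\frac{1}{\lambda^{s(\gamma)}(F)}$, contributing $\frac{\lambda^{r(\gamma)}(\gamma F \setminus F)}{\lambda^{s(\gamma)}(F)}$. On $F \setminus \gamma F$ it is $\frac{1}{\lambda^{r(\gamma)}(F)}$, contributing $\frac{\lambda^{r(\gamma)}(F \setminus \gamma F)}{\lambda^{r(\gamma)}(F)}$. On $\gamma F \cap F$ the integrand is $\left|\frac{1}{\lambda^{s(\gamma)}(F)} - \frac{1}{\lambda^{r(\gamma)}(F)}\right|$, contributing $\lambda^{r(\gamma)}(\gamma F \cap F)\left|\frac{1}{\lambda^{s(\gamma)}(F)} - \frac{1}{\lambda^{r(\gamma)}(F)}\right|$. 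Here I would use the left-invariance of the Haar system: $\lambda^{r(\gamma)}(\gamma F) = \lambda^{s(\gamma)}(F)$, which is what makes the first term clean and also lets me rewrite $\lambda^{r(\gamma)}(\gamma F \cap F) \le \lambda^{r(\gamma)}(\gamma F) = \lambda^{s(\gamma)}(F)$ when bounding the third term.

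For the middle term, write $\lambda^{r(\gamma)}(F \setminus \gamma F) = \lambda^{r(\gamma)}(F) - \lambda^{r(\gamma)}(F \cap \gamma F)$ and for the last term bound $\left|\frac{1}{\lambda^{s(\gamma)}(F)} - \frac{1}{\lambda^{r(\gamma)}(F)}\right| = \frac{|\lambda^{r(\gamma)}(F) - \lambda^{s(\gamma)}(F)|}{\lambda^{s(\gamma)}(F)\lambda^{r(\gamma)}(F)}$. Since $|\lambda^{r(\gamma)}(F) - \lambda^{s(\gamma)}(F)| = |\lambda^{r(\gamma)}(F) - \lambda^{r(\gamma)}(\gamma F)| \le \lambda^{r(\gamma)}(\gamma F \Delta F)$, the third contribution is at most $\frac{\lambda^{r(\gamma)}(\gamma F \Delta F)}{\lambda^{r(\gamma)}(F)}$. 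Similarly the first two contributions together are at most $\frac{\lambda^{r(\gamma)}(\gamma F \Delta F)}{\lambda^{s(\gamma)}(F)} + \frac{\lambda^{r(\gamma)}(\gamma F \Delta F)}{\lambda^{r(\gamma)}(F)}$; using $\lambda^{r(\gamma)}(F) = \lambda^{r(\gamma)}(\gamma F \cap F) + \lambda^{r(\gamma)}(F \setminus \gamma F) \le \lambda^{s(\gamma)}(F) + \lambda^{r(\gamma)}(\gamma F \Delta F)$ one can compare the two denominators, but a cleaner route is to bound each of the three pieces crudely by $\frac{\lambda^{r(\gamma)}(\gamma F \Delta F)}{\min(\lambda^{s(\gamma)}(F), \lambda^{r(\gamma)}(F))}$ and then observe this minimum is controlled. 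In fact the simplest bookkeeping giving exactly the factor $2$ in the statement is: pieces one and three sum to at most $\frac{2\lambda^{r(\gamma)}(\gamma F \Delta F)}{\lambda^{s(\gamma)}(F)}$ only after noting piece three is $\le \frac{\lambda^{r(\gamma)}(\gamma F \Delta F)}{\lambda^{r(\gamma)}(F)} \cdot \frac{\lambda^{r(\gamma)}(\gamma F \cap F)}{\lambda^{s(\gamma)}(F)} \cdot \frac{\lambda^{s(\gamma)}(F)}{\lambda^{r(\gamma)}(F \cap \gamma F)}$ — so the main obstacle is purely the arithmetic of juggling the two possibly-different normalizations $\lambda^{r(\gamma)}(F)$ and $\lambda^{s(\gamma)}(F)$ so that everything collapses to the single denominator $\lambda^{s(\gamma)}(F)$ with constant $2$. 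I expect that once the three pieces are written out, the identity $\lambda^{r(\gamma)}(\gamma F \cap F) + \lambda^{r(\gamma)}(F \setminus \gamma F) = \lambda^{r(\gamma)}(F)$ together with $\lambda^{r(\gamma)}(\gamma F) = \lambda^{s(\gamma)}(F)$ makes the telescoping transparent and yields the claimed inequality without any further subtlety; there is no serious conceptual difficulty, only careful bookkeeping.
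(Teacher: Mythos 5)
Your reduction of the integrand to $\bigl|\tfrac{{\bf 1}_{\gamma F}(\gamma_1)}{\lambda^{s(\gamma)}(F)}-\tfrac{{\bf 1}_{F}(\gamma_1)}{\lambda^{r(\gamma)}(F)}\bigr|$ and the three-piece decomposition over $\gamma F\setminus F$, $F\setminus\gamma F$, $\gamma F\cap F$ are both correct, but the concluding arithmetic has a genuine gap. Write $a=\lambda^{r(\gamma)}(F)$, $b=\lambda^{s(\gamma)}(F)=\lambda^{r(\gamma)}(\gamma F)$ and $D=\lambda^{r(\gamma)}(\gamma F\Delta F)$. The bounds you actually establish for the three pieces are $D/b$, $D/a$ and $D/a$, so what you have proved is the estimate $D/b+2D/a$, which is $\le 2D/b$ only when $a\ge 2b$; nothing forces $\lambda^{r(\gamma)}(F)\ge 2\lambda^{s(\gamma)}(F)$, and the fallback bound $3D/\min(a,b)$ fails for the same reason. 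The displayed ``simplest bookkeeping'' product for the third piece cancels identically to $D/a$ and advances nothing, and the final sentence simply asserts that the telescoping works. The whole difficulty of the lemma is precisely the conversion of the denominator $a$ into $b$, and that step is missing.

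The inequality is true and your decomposition can be pushed through, but it requires an actual computation rather than termwise crude bounds. With $P=\lambda^{r(\gamma)}(\gamma F\setminus F)$, $Q=\lambda^{r(\gamma)}(F\setminus\gamma F)$, $I=\lambda^{r(\gamma)}(\gamma F\cap F)$, so that $a=Q+I$, $b=P+I$ and $|a-b|=|Q-P|$, the claim $P/b+Q/a+I|Q-P|/(ab)\le 2(P+Q)/b$ becomes, after clearing denominators and substituting, $IP+2Q^2+IQ-I|Q-P|\ge 0$, which holds in both cases $Q\ge P$ and $Q<P$. The paper sidesteps this case analysis with a pointwise bound: adding and subtracting $a\,{\bf 1}_{F}(\gamma_1)$ in the numerator of the difference gives $|g_F(\gamma^{-1}\gamma_1)-g_F(\gamma_1)|\le \frac{a\,{\bf 1}_{\gamma F\Delta F}(\gamma_1)+|a-b|\,{\bf 1}_{F}(\gamma_1)}{ab}$, and integrating against $\lambda^{r(\gamma)}$ yields $D/b$ from each of the two terms, using $|a-b|=\bigl|\int({\bf 1}_{F}-{\bf 1}_{\gamma F})\,d\lambda^{r(\gamma)}\bigr|\le D$. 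Either route works; yours as written does not yet close.
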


\begin{proof}
The proof is an easy computation:
$$g(\gamma^{-1}\gamma_1)-g(\gamma_1)=
\frac{\lambda^{r(\gamma)}(F){\bf 1}_{F}(\gamma^{-1}\gamma_1)-\lambda^{s(\gamma)}(F){\bf 1}_{F}(\gamma_1)}{\lambda^{r(\gamma)}(F)\lambda^{s(\gamma)}(F)}.$$
Adding and subtracting
$\lambda^{r(\gamma)}(F){\bf 1}_{F}(\gamma_1)$, one gets:
$$|g(\gamma^{-1}\gamma_1)-g(\gamma_1)|\le
\frac{\lambda^{r(\gamma)}(F){\bf 1}_{\gamma F\Delta  F}(\gamma_1)+|\lambda^{r(\gamma)}(F)-\lambda^{s(\gamma)}(F)|{\bf 1}_{F}(\gamma_1)}{\lambda^{r(\gamma)}(F)\lambda^{s(\gamma)}(F)}.$$
Moreover, one has
$$|\lambda^{r(\gamma)}(F)-\lambda^{s(\gamma)}(F)|=
|\int ({\bf 1}_{F}-{\bf 1}_{\gamma F})d\lambda^{r(\gamma)}|\le \lambda^{r(\gamma)}(\gamma F\Delta F).$$ 
Thus, by integrating over $\gamma_1$, one obtains the inequality
$$\int| g(\gamma^{-1}\gamma_1)-g(\gamma_1)|d\lambda^{r(\gamma)}(\gamma_1)\le 2\frac{\lambda^{r(\gamma)}(\gamma F\Delta F)}{\lambda^{s(\gamma)}(F)}.$$
\end{proof}

\begin{proposition}\label{Folner} Let $(G,\lambda)$ be a Borel groupoid endowed with a Borel Haar system. If there exists a F\o lner sequence, then $G$ is Borel amenable.
\end{proposition}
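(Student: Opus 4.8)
The plan is to build a Borel approximate invariant density directly from the F\o lner sequence, then invoke \propref{Borel density and mean} to conclude Borel amenability. The starting observation is \lemref{estimate}: for each $n$, the normalized characteristic function $g_n := g_{F_n}$, defined by $g_n(\gamma) = {\bf 1}_{F_n}(\gamma)/\lambda^{r(\gamma)}(F_n)$, is a non-negative Borel function on $G$ (Borel because $x\mapsto \lambda^x(F_n)$ is Borel and strictly positive by condition (i) of \defnref{Folner}), and it satisfies
$$\int |g_n(\gamma^{-1}\gamma_1) - g_n(\gamma_1)|\, d\lambda^{r(\gamma)}(\gamma_1) \le 2\,\frac{\lambda^{r(\gamma)}(\gamma F_n \Delta F_n)}{\lambda^{s(\gamma)}(F_n)}.$$
By condition (ii) of \defnref{Folner}, the right-hand side tends to $0$ for every $\gamma \in G$, so the sequence $(g_n)$ satisfies condition (iii) of \defnref{Borel approximate density}.

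The remaining task is to arrange conditions (i) and (ii) of \defnref{Borel approximate density}, namely $\int g_n\, d\lambda^x \le 1$ and $\int g_n\, d\lambda^x \to 1$. This is where a small renormalization is needed, because $g_n$ as defined above does \emph{not} have total $\lambda^x$-mass equal to $1$ in general: one computes $\int g_n\, d\lambda^x = \int_{F_n} \frac{1}{\lambda^{r(\gamma_1)}(F_n)}\, d\lambda^x(\gamma_1)$, and since $r(\gamma_1) = x$ for $\lambda^x$-almost every $\gamma_1$, this integral equals $\lambda^x(F_n)/\lambda^x(F_n) = 1$ on the nose. So in fact $g_n$ is already normalized to have mass exactly $1$ on every fiber, and conditions (i) and (ii) hold trivially (with equality). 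Thus $(g_n)$ is a Borel approximate invariant density as it stands, and no renormalization is required.

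Putting this together: first I would verify that each $g_n$ is Borel and that $\int g_n\, d\lambda^x = 1$ for all $x$ by the fiberwise computation just sketched, giving conditions (i) and (ii). Then \lemref{estimate} combined with condition (ii) of the F\o lner hypothesis gives condition (iii). By \propref{Borel density and mean}, the existence of this Borel approximate invariant density is equivalent to Borel amenability of $G$. I do not anticipate a serious obstacle here: the only point requiring a moment's care is the justification that $r(\gamma_1) = x$ holds $\lambda^x$-a.e., which is simply the definition of $\lambda^x$ being supported on the fiber $G^x = r^{-1}(x)$; everything else is the content of \lemref{estimate} and \propref{Borel density and mean}, both available from the excerpt.
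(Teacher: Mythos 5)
Your proposal is correct and follows exactly the paper's own argument: take the normalized characteristic functions $g_{F_n}$, note that $\int g_{F_n}\,d\lambda^x = 1$ because $\lambda^x$ is carried by $G^x$, and deduce approximate invariance from \lemref{estimate} together with the F\o lner condition. Nothing further is needed.
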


\begin{proof} We check that the sequence $(g_n)$ of normalized characteristic functions of the $F_n$'s satisfies \defnref{Borel approximate density}. It is clear that $g_n$ is a non-negative Borel function and that for all $x\in G^{(0)}$, $\int g_nd\lambda^x=1$. The approximate invariance is given by the above lemma.
\end{proof}

\begin{corollary}\label{Alembert} Let $(G,\lambda)$ be a Borel groupoid endowed with a Borel Haar system. Let  $(E_n)_{n\in\N}$ be an increasing  and exhausting sequence of Borel subsets of $G$ such that
\begin{enumerate}
\item for all $n\in\N$ and for all $x\in G^{(0)}$, $0<\lambda^x(E_n)<\infty$;
\item for all $m,n\in\N$, $E_mE_n\subset E_{m+n}$;
\item for all $x\in G^{(0)}$, $\displaystyle\frac{\lambda^x(E_{n+1})}{\lambda^x(E_n)}$ tends to 1.
\end{enumerate}
Then $(E_n)$ is a F\o lner sequence; therefore $G$ is Borel amenable.
\end{corollary}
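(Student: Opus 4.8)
The plan is to verify the two conditions of Definition~\ref{Folner} for the sequence $(E_n)$, and then invoke Proposition~\ref{Folner}. Condition $({\rm i})$ of Definition~\ref{Folner} is immediate: it is precisely hypothesis $({\rm i})$ here. So the work is entirely in establishing the F\o lner condition $({\rm ii})$, or equivalently, in view of Remark~\ref{Folner bis}, the condition $({\rm ii}\,{\rm bis})$ that $\lambda^{r(\gamma)}(\gamma E_n\cap E_n)/\lambda^{s(\gamma)}(E_n)\to 1$ for each fixed $\gamma\in G$.

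First I would fix $\gamma\in G$. Since $(E_n)$ is exhausting, there is an index $k$ with $\gamma\in E_k$ (and, replacing $E_k$ by a later term if needed, also $\gamma^{-1}\in E_k$, using that the sequence is increasing and exhausting). The key geometric observation is then the nesting $E_n\subset E_k E_n\subset E_{n+k}$, obtained from the subsemigroup-type hypothesis $({\rm ii})$ here. Apply $\lambda^{r(\gamma)}$, using that $\gamma E_n$ and $E_n$ live over the appropriate units: one gets inclusions of the form $\gamma E_n\subset E_{n+k}^{\,r(\gamma)}$ and $E_n^{\,r(\gamma)}\subset E_{n+k}^{\,r(\gamma)}$ as subsets of $G^{r(\gamma)}$, and likewise $\gamma^{-1}$ applied to things over $s(\gamma)$. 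Consequently the symmetric difference $\gamma E_n\Delta E_n$, measured by $\lambda^{r(\gamma)}$, is bounded by $\lambda^{r(\gamma)}(E_{n+k})-\lambda^{r(\gamma)}(E_n)$ plus a symmetric term coming from applying $\gamma^{-1}$; more precisely one estimates
$$\lambda^{r(\gamma)}(\gamma E_n\Delta E_n)\le \bigl(\lambda^{r(\gamma)}(E_{n+k})-\lambda^{r(\gamma)}(E_n)\bigr)+\bigl(\lambda^{s(\gamma)}(E_{n+k})-\lambda^{s(\gamma)}(E_n)\bigr),$$
using left invariance $\gamma\lambda^{s(\gamma)}=\lambda^{r(\gamma)}$ to rewrite $\lambda^{r(\gamma)}(\gamma E_n)=\lambda^{s(\gamma)}(E_n)$ and to push the ``$E_n\setminus\gamma E_n$'' part down to a statement over $s(\gamma)$.

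Next I would divide by $\lambda^{s(\gamma)}(E_n)$ and show the right-hand side tends to $0$. By hypothesis $({\rm iii})$, $\lambda^x(E_{n+1})/\lambda^x(E_n)\to1$ for every $x$; since $k$ is a fixed finite number, a telescoping product gives $\lambda^x(E_{n+k})/\lambda^x(E_n)\to1$ as well, for $x=r(\gamma)$ and $x=s(\gamma)$. It remains to compare the denominators $\lambda^{r(\gamma)}(E_n)$ and $\lambda^{s(\gamma)}(E_n)$: from $E_n\subset E_k E_n\subset E_{n+k}$ one has $\lambda^{r(\gamma)}(E_n)\le \lambda^{r(\gamma)}(E_{n+k})=\lambda^{s(\gamma)}(E_n)\cdot\bigl(\lambda^{r(\gamma)}(E_{n+k})/\lambda^{r(\gamma)}(E_n)\bigr)\cdot\bigl(\lambda^{r(\gamma)}(E_n)/\lambda^{s(\gamma)}(E_n)\bigr)$; combined with the symmetric inequality obtained from $\gamma^{-1}$, this forces $\lambda^{r(\gamma)}(E_n)/\lambda^{s(\gamma)}(E_n)\to1$. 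Feeding these limits back into the displayed estimate shows $\lambda^{r(\gamma)}(\gamma E_n\Delta E_n)/\lambda^{s(\gamma)}(E_n)\to0$, which is condition $({\rm ii})$ of Definition~\ref{Folner}. Hence $(E_n)$ is a F\o lner sequence, and Proposition~\ref{Folner} yields that $G$ is Borel amenable.

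The main obstacle is bookkeeping: one has to be careful that each set inclusion is interpreted over the correct unit (elements of $\gamma E_n$ lie in $G^{r(\gamma)}$, elements of $E_n$ over $s(\gamma)$ in $G^{s(\gamma)}$), and to use left invariance of $\lambda$ in exactly the right places so that the two ``defect'' terms $\lambda^x(E_{n+k})-\lambda^x(E_n)$ appear with the right base point $x$. There is also the mild point that one needs $\gamma^{-1}$, not just $\gamma$, to lie in some $E_k$; this is harmless since $(E_n)$ is increasing and exhausting, so enlarging $k$ once accommodates both $\gamma$ and $\gamma^{-1}$. No deep idea beyond the semigroup inclusion $({\rm ii})$ and the d'Alembert-type ratio hypothesis $({\rm iii})$ is required.
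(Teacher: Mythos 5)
Your proof is correct, and it reaches the same conclusion by a slightly different decomposition than the paper. The paper verifies condition (ii\,bis) of Remark~\ref{Folner bis} directly via the single inclusion $\gamma E_{n-k}\subset\gamma E_n\cap E_n$ (valid for $\gamma\in E_k$, $k\le n$, since $E_{n-k}\subset E_n$ and $\gamma E_{n-k}\subset E_kE_{n-k}\subset E_n$), which together with left invariance sandwiches $\lambda^{r(\gamma)}(\gamma E_n\cap E_n)/\lambda^{s(\gamma)}(E_n)$ between $\lambda^{s(\gamma)}(E_{n-k})/\lambda^{s(\gamma)}(E_n)$ and $1$; telescoping hypothesis (iii) then finishes in one line, using only $\gamma\in E_k$. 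You instead bound the symmetric difference of Definition~\ref{Folner}(ii) from above by the two defect terms $\lambda^{x}(E_{n+k})-\lambda^{x}(E_n)$ for $x=r(\gamma)$ and $x=s(\gamma)$, which is also correct but costs you two extra ingredients: you must arrange $\gamma^{-1}\in E_k$ as well as $\gamma\in E_k$, and you must separately establish $\lambda^{r(\gamma)}(E_n)/\lambda^{s(\gamma)}(E_n)\to1$ to normalize the first defect term. That last step is where your write-up is weakest: the displayed identity you give is a tautology and proves nothing by itself; the correct justification is that $\gamma E_n\subset E_{n+k}$ and $\gamma^{-1}E_n\subset E_{n+k}$ give $\lambda^{s(\gamma)}(E_n)\le\lambda^{r(\gamma)}(E_{n+k})$ and $\lambda^{r(\gamma)}(E_n)\le\lambda^{s(\gamma)}(E_{n+k})$, whence both $\lambda^{r(\gamma)}(E_n)/\lambda^{s(\gamma)}(E_n)$ and its reciprocal are eventually bounded by quantities tending to $1$ by (iii). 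With that repair your argument is complete; the paper's intersection-based route is simply the more economical of the two.
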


\begin{proof} Assuming that $\gamma\in E_k$ and $k\le n$, we have:
$$\gamma E_{n-k}\subset\gamma E_n\cap E_n.$$
Using the relation $\lambda^{s(\gamma)}(E_{n-k})=\lambda^{r(\gamma)}(\gamma E_{n-k})$, we deduce the inequality
\begin{equation}
\label{eq:3.1}
\frac{\lambda^{s(\gamma)}(E_{n-k})}{\lambda^{s(\gamma)}(E_n)}\le\frac{\lambda^{r(\gamma)}(\gamma E_n\cap E_n)}{\lambda^{s(\gamma)}(E_n)}\le 1
\end{equation}
Our assumption $({\rm iii})$ implies that the left handside goes to 1 when $n$ goes to infinity. Therefore, $(E_n)$ satisfies condition $({\rm ii}\, {\rm bis})$ of \remref{Folner bis}.
\end{proof}

In our next result, we replace the d'Alembert ratio $\displaystyle\frac{\lambda^x(E_{n+1})}{\lambda^x(E_n)}$ by the Cauchy exponent
$\displaystyle\lambda^x(E_n)^{1/n}$. However, we shall need stronger hypotheses to obtain amenability. We first adapt to the Borel setting our definition of $(L,K,\epsilon)$-invariance.

\begin{definition}\label{(L,K,eps)} Let $(G,\lambda)$ be a Borel groupoid endowed with a Borel Haar system. Given a Borel subset $L$ of $G^{(0)}$, a Borel subset $K$ of $G$ and $\epsilon>0$, we say that a non-negative Borel function $g$ on $G$ is $(L,K,\epsilon)$-{\it invariant} if
\begin{enumerate}
\item $\int g d\lambda^x\le 1$ for all $x\in G^{(0)}$;
\item $\int g d\lambda^x=1$ for all $x\in L$;
\item $\int|g(\gamma^{-1}\gamma_1)-g(\gamma_1)|d\lambda^{r(\gamma)}(\gamma_1)\le \epsilon$ for all $\gamma\in K$.
\end{enumerate}
\end{definition}

\begin{lemma}\label{glueing} Let $(G,\lambda)$ be a Borel groupoid endowed with a Borel Haar system. Let $L$ be a Borel subset of $G^{(0)}$, $K$  a Borel subset of $G$  and $\epsilon>0$. Let $(L_i)_{i\in I}$ be a locally finite Borel cover of $L$. Suppose that for each $i\in I$, there exists a non-negative Borel function $g_i$ on $G$ which is $(L_i, K^{L_i},\epsilon)$-invariant. Then there exists a non-negative Borel function $g$ on $G$ which is $(L, K^L,\epsilon)$-invariant.
\end{lemma}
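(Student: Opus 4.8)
The plan is to glue the $g_i$ via a Borel partition of unity subordinate to the cover. Using local finiteness we may take $I$ countable and fix a Borel partition of unity $(\phi_i)_{i\in I}$ on $G^{(0)}$ with $0\le\phi_i\le{\bf 1}_{L_i}$ and $\sum_{i\in I}\phi_i={\bf 1}_L$ — for instance $\phi_i={\bf 1}_{P_i}$ with $P_i=L_i\setminus\bigcup_{j<i}L_j$ for a fixed enumeration of $I$. Then I would set $g(\gamma)=\sum_{i\in I}\phi_i(r(\gamma))\,g_i(\gamma)$, a non-negative Borel function on $G$ (the sum being pointwise finite). Conditions (i) and (ii) of \defnref{(L,K,eps)} are then immediate: $\int g\,d\lambda^x=\sum_i\phi_i(x)\int g_i\,d\lambda^x\le\sum_i\phi_i(x)\le1$, and for $x\in L$ the relation $\phi_i(x)\ne0$ forces $x\in L_i$, so $\int g_i\,d\lambda^x=1$ and hence $\int g\,d\lambda^x=\sum_i\phi_i(x)=1$.

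The substance is condition (iii). Fix $\gamma\in K^L$; for $\gamma_1\in G^{r(\gamma)}$ one has $r(\gamma_1)=r(\gamma)$ and $r(\gamma^{-1}\gamma_1)=s(\gamma)$, so I would write
$$g(\gamma^{-1}\gamma_1)-g(\gamma_1)=\sum_i\phi_i(r(\gamma))\bigl(g_i(\gamma^{-1}\gamma_1)-g_i(\gamma_1)\bigr)+\sum_i\bigl(\phi_i(s(\gamma))-\phi_i(r(\gamma))\bigr)g_i(\gamma^{-1}\gamma_1),$$
integrate $|\cdot|\,d\lambda^{r(\gamma)}(\gamma_1)$, and estimate the two sums separately. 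In the first sum, $\phi_i(r(\gamma))\ne0$ implies $r(\gamma)\in L_i$, hence $\gamma\in K^{L_i}$, so the invariance of $g_i$ gives $\int|g_i(\gamma^{-1}\gamma_1)-g_i(\gamma_1)|\,d\lambda^{r(\gamma)}(\gamma_1)\le\epsilon$; since $\sum_i\phi_i(r(\gamma))=1$ this sum contributes at most $\epsilon$. In the second sum, the left-invariance of $\lambda$ gives $\int g_i(\gamma^{-1}\gamma_1)\,d\lambda^{r(\gamma)}(\gamma_1)=\int g_i\,d\lambda^{s(\gamma)}\le1$, so this sum is bounded by the boundary term $\sum_i|\phi_i(s(\gamma))-\phi_i(r(\gamma))|$. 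Altogether $\int|g(\gamma^{-1}\gamma_1)-g(\gamma_1)|\,d\lambda^{r(\gamma)}(\gamma_1)\le\epsilon+\sum_i|\phi_i(s(\gamma))-\phi_i(r(\gamma))|$.

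The main obstacle is to kill the boundary term $\sum_i|\phi_i(s(\gamma))-\phi_i(r(\gamma))|$ for every $\gamma\in K^L$, i.e. to choose the partition of unity constant along $K^L$: equivalently, to disjointify the $L_i$ into Borel pieces $P_i\subseteq L_i$ each of which is a union of $K^L$-orbits. This is precisely where local finiteness is meant to enter — only finitely many $L_i$ meet a given orbit, so there is room to assign orbits to the $L_i$ in a Borel way and thereby align the partition with $K^L$ — and I expect this step to be the delicate part of the argument, both for the measurability bookkeeping and because one must check that the cover is rich enough for the alignment to be possible (if not, one would first pass to a suitable refinement, or run the construction by successive approximation over an enumeration of $I$). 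Granting such a partition, $\phi_i(r(\gamma))=\phi_i(s(\gamma))$ for all $\gamma\in K^L$, the boundary term vanishes, and $g$ is $(L,K^L,\epsilon)$-invariant; the verification that all selections involved and the sum defining $g$ are Borel is then routine given the standing Borel hypotheses.
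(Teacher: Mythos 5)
Your construction $g=\sum_i(\phi_i\circ r)\,g_i$ and the verification of conditions $({\rm i})$ and $({\rm ii})$ of \defnref{(L,K,eps)} are exactly the paper's argument, and your analysis of condition $({\rm iii})$ is the correct one: since $r(\gamma^{-1}\gamma_1)=s(\gamma)$ while $r(\gamma_1)=r(\gamma)$, the translate of $g$ carries the coefficients $\phi_i(s(\gamma))$, and after the triangle inequality one is left with the boundary term $\sum_i|\phi_i(s(\gamma))-\phi_i(r(\gamma))|$. The gap is that the fix you propose --- disjointifying the $L_i$ into Borel pieces that are unions of classes of the equivalence relation generated by $K^L$ --- is in general impossible, and local finiteness does not rescue it: if some $\gamma\in K^L$ has $s(\gamma)\notin L$, then $\sum_i\phi_i(s(\gamma))=0\neq 1=\sum_i\phi_i(r(\gamma))$ whatever the $\phi_i$ supported in the $L_i$ are; and even when $L=G^{(0)}$, a $K$-class meeting $L_1\setminus L_2$ and $L_2\setminus L_1$ and no other $L_i$ cannot be assigned to either set without violating $\phi_i\le{\bf 1}_{L_i}$ or $\sum_i\phi_i=1$. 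So as written your argument does not close, and you are right to flag this as the delicate step rather than wave it through.

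You should know, though, that the paper's own proof is the same gluing and simply asserts $\int|g(\gamma^{-1}\gamma_1)-g(\gamma_1)|\,d\lambda^{r(\gamma)}\le\sum_i h_i(r(\gamma))\int|g_i(\gamma^{-1}\gamma_1)-g_i(\gamma_1)|\,d\lambda^{r(\gamma)}(\gamma_1)$, i.e.\ it takes the coefficient of $g_i(\gamma^{-1}\gamma_1)$ to be $h_i(r(\gamma))$ when it is in fact $h_i(s(\gamma))$; the boundary term you isolate is precisely what that inequality discards. In other words, you have located the weak point of the published argument rather than missed an idea contained in it. The computation is genuinely valid when the partition of unity is invariant, which is exactly the situation of the groupoid-bundle theorem later in the section, where $h_i\circ p\circ r=h_i\circ p\circ s$ --- your ``orbit-aligned'' condition. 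To repair the lemma in the stated generality one needs some additional input: for instance, since $\sum_i\bigl(\phi_i(s(\gamma))-\phi_i(r(\gamma))\bigr)=0$ whenever $s(\gamma)\in L$, the boundary term only sees differences $g_i-g_j$ over indices whose $L_i$ contain $r(\gamma)$ or $s(\gamma)$, so a hypothesis controlling $\|g_{i|x}-g_{j|x}\|_1$ on the overlaps $L_i\cap L_j$ (together with a treatment of the case $s(\gamma)\notin L$) would suffice; such control has to be verified separately in the application to Corollary~\ref{Cauchy}.
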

 
\begin{proof}
Let $(h_i)_{i\in I}$ be a Borel partition of unity subordinate to $(L_i)_{i\in I}$: $h_i$ is Borel, $0\le h_i\le 1$, $h_i(x)=0$ if $x\notin L_i$ and for all $x\in G^{(0)}$, $\sum_{i\in I} h_i(x)=1$. Then $g=\sum_{i\in I} (h_i\circ r)g_i$ is a well-defined, non-negative, Borel function on $G$. For all $x\in G^{(0)}$, $\int g d\lambda^x=\sum_I h_i(x)\int g_i d\lambda^x\le 1$. For all $x\in L$, $\int g d\lambda^x=\sum_I h_i(x)\int g_i d\lambda^x = 1$. If $\gamma\in K^L$, then
 $$\int |g(\gamma^{-1}\gamma_1)-g(\gamma_1)|d\lambda^{r(\gamma)}\le\sum_I h_i(r(\gamma))\int |g_i(\gamma^{-1}\gamma_1)-g_i(\gamma_1)|d\lambda^{r(\gamma)}(\gamma_1)\le \epsilon.$$ 
 \end{proof}

\begin{lemma}\label{aim} Let $(G,\lambda)$ be a Borel groupoid endowed with a Borel Haar system. If there exist $(L_n)$ increasing and exhausting sequence of Borel subsets of $G^{(0)}$, $(K_n)$ increasing and exhausting sequence of Borel subsets of $G$ and $(\epsilon_n)$ a sequence of positive numbers decreasing to $0$ and for each $n$ a $(L_n, K^{L_n}_n,\epsilon_n)$-invariant non-negative Borel function $g_n$ on $G$. Then $(g_n)$ is a Borel approximate invariant density and $G$ is Borel amenable.
\end{lemma}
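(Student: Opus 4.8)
The plan is to check directly that the sequence $(g_n)$ satisfies the three conditions of \defnref{Borel approximate density} and then to invoke \propref{Borel density and mean}. First, each $g_n$ is a non-negative Borel function by hypothesis, and condition (i) of the definition of a Borel approximate invariant density, $\int g_n\, d\lambda^x\le 1$ for all $x\in G^{(0)}$ and all $n$, is exactly condition (i) of $(L_n,K_n^{L_n},\epsilon_n)$-invariance.

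Next I would establish condition (ii). Fix $x\in G^{(0)}$; since $(L_n)$ is increasing with $\bigcup_n L_n=G^{(0)}$, there is $N$ such that $x\in L_n$ for all $n\ge N$, and then condition (ii) of $(L_n,K_n^{L_n},\epsilon_n)$-invariance gives $\int g_n\, d\lambda^x=1$ for every $n\ge N$, so $\int g_n\, d\lambda^x\to 1$. For condition (iii), fix $\gamma\in G$. Since $(K_n)$ increases to $G$ and $(L_n)$ increases to $G^{(0)}$, there is $N$ such that for all $n\ge N$ we have both $\gamma\in K_n$ and $r(\gamma)\in L_n$, hence $\gamma\in K_n\cap r^{-1}(L_n)=K_n^{L_n}$; condition (iii) of $(L_n,K_n^{L_n},\epsilon_n)$-invariance then yields
$$\int|g_n(\gamma^{-1}\gamma_1)-g_n(\gamma_1)|\,d\lambda^{r(\gamma)}(\gamma_1)\le\epsilon_n,$$
which tends to $0$ because $\epsilon_n\to 0$. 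Therefore $(g_n)$ is a Borel approximate invariant density, and \propref{Borel density and mean} shows that $G$ is Borel amenable.

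I do not expect any genuine obstacle: this is essentially a bookkeeping lemma that packages the local patching of \lemref{glueing} together with the exhaustion hypotheses into the global notion of Borel amenability. The only subtlety worth flagging is that the approximate-invariance bound is assumed only on $K_n^{L_n}$, so in verifying condition (iii) one must use the exhaustion of $G^{(0)}$ by the $L_n$ --- not merely the exhaustion of $G$ by the $K_n$ --- to guarantee that a fixed $\gamma$ eventually lies in $K_n^{L_n}$.
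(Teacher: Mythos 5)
Your proof is correct and matches the paper, which simply asserts that the verification of the three conditions of \defnref{Borel approximate density} is clear; you have supplied exactly the routine checks (including the correct reading of $K_n^{L_n}=K_n\cap r^{-1}(L_n)$ and the need for $r(\gamma)$ to eventually lie in $L_n$) that the paper leaves implicit.
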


\begin{proof} It is clear that the sequence $(g_n)$ satisfies the conditions of \defnref{Borel approximate density}.
\end{proof}

\begin{corollary}\label{Cauchy}  Let $(G,\lambda)$ be a Borel groupoid endowed with a Borel Haar system. Suppose that there exists an increasing  and exhausting sequence $(E_n)_{n\in\N}$ of Borel subsets of $G$ such that
\begin{enumerate}
\item for all $n\in\N$ and for all $x\in G^{(0)}$, $1\le\lambda^x(E_n)<\infty$;
\item for all $m,n\in\N$, $E_mE_n\subset E_{m+n}$;
\item  $\displaystyle{\lambda^x(E_n)}^{1/n}$ tends to 1 uniformly on  $G^{(0)}$ when $n$ goes to infinity;
\item  $\displaystyle{\lambda^{r(\gamma)}(E_n)\over\lambda^{s(\gamma)}(E_n)}$ tends to 1 uniformly on  $G$  when $n$ goes to infinity.
\end{enumerate}
Then $G$ is Borel amenable.
\end{corollary}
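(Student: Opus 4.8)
The plan is to exhibit, for every $n$, an $(L_n,K_n^{L_n},\epsilon_n)$-invariant Borel function in the sense of \defnref{(L,K,eps)} and then invoke \lemref{aim}; the building blocks will be the normalized characteristic functions $g_{E_m}(\gamma)={\bf 1}_{E_m}(\gamma)/\lambda^{r(\gamma)}(E_m)$. By hypothesis~(i) each $g_{E_m}$ is a well-defined non-negative Borel function with $\int g_{E_m}\,d\lambda^x=1$ for all $x$, so conditions~(i) and~(ii) of \defnref{(L,K,eps)} hold automatically and only the approximate invariance needs to be arranged. For the latter I will use that, whenever $\gamma,\gamma^{-1}\in E_k$ and $m>k$, hypothesis~(ii) gives $\gamma E_m\,\Delta\,E_m\subset E_{m+k}\setminus E_{m-k}$ --- the inclusion $\gamma E_m\subset E_kE_m\subset E_{m+k}$ and, from $\gamma^{-1}\in E_k$, the inclusion $E_{m-k}\cap G^{r(\gamma)}\subset\gamma E_m$. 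Combined with \lemref{estimate} and hypothesis~(iv), which bounds $\lambda^{s(\gamma)}(E_m)$ below by $(1-\eta_m)\lambda^{r(\gamma)}(E_m)$ with $\eta_m\to 0$ uniformly in $\gamma$, this yields
$$\int|g_{E_m}(\gamma^{-1}\gamma_1)-g_{E_m}(\gamma_1)|\,d\lambda^{r(\gamma)}(\gamma_1)\ \le\ \frac{2}{1-\eta_m}\Big(\frac{\lambda^{r(\gamma)}(E_{m+k})}{\lambda^{r(\gamma)}(E_{m-k})}-1\Big).$$
So the problem is reduced to finding, for a suitable Borel partition of $G^{(0)}$, an index $m$ on each piece for which $\lambda^{x}(E_{m+k})/\lambda^{x}(E_{m-k})$ is close to $1$.

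The combinatorial content of hypothesis~(iii) is the following: \emph{for every $k\in\N$ and $\delta>0$ there is $M_0$, independent of $x$, such that for all $M\ge M_0$ at least half of the indices $m\in[M,2M]$ satisfy $\lambda^x(E_{m+k})\le e^{\delta}\lambda^x(E_{m-k})$, for every $x\in G^{(0)}$.} This is proved by telescoping and Markov's inequality: $\sum_{m=M}^{2M}\log\frac{\lambda^x(E_{m+k})}{\lambda^x(E_{m-k})}=\sum_{i=-k}^{k-1}\big(\log\lambda^x(E_{2M+i+1})-\log\lambda^x(E_{M+i})\big)\le 2k\log\lambda^x(E_{2M+k})$, which by~(iii) is at most $2k(2M+k)\log(1+\theta)$ once $M$ is large enough; since $(\lambda^x(E_j))_j$ is non-decreasing and, by~(i), bounded below by $1$, all summands are non-negative, so choosing $\theta$ with $6k\log(1+\theta)\le\delta$ and applying Markov's inequality gives the claim.

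With this in hand, fix an increasing exhausting sequence of symmetric Borel subsets $K_n=E_{k_n}\cap E_{k_n}^{-1}$ of $G$ with $k_n\to\infty$ and a sequence $\epsilon_n\downarrow 0$. At stage $n$, choose $\eta$ and then $\delta_n$ small (so that $2(e^{\delta_n}-1)/(1-\eta)\le\epsilon_n$), and then $M_n$ large --- using the uniform thresholds from the combinatorial lemma and from~(iv), and $M_n>k_n$. For $m\in[M_n,2M_n]$ set $A_m^{(n)}=\{x\in G^{(0)}:\lambda^x(E_{m+k_n})\le e^{\delta_n}\lambda^x(E_{m-k_n})\}$; these are Borel (the Haar system being Borel), they cover $G^{(0)}$ by the lemma, and the displayed inequality shows that $g_{E_m}$ is $(A_m^{(n)},K_n^{A_m^{(n)}},\epsilon_n)$-invariant. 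Gluing these finitely many functions over the cover $(A_m^{(n)})_m$ by a Borel partition of unity (\lemref{glueing}) produces, for any increasing exhausting $(L_n)$ in $G^{(0)}$, an $(L_n,K_n^{L_n},\epsilon_n)$-invariant Borel function, whence $G$ is Borel amenable by \lemref{aim}.

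The step I expect to be the main obstacle is the Borel gluing. The naive combination $\sum_m(h_m\circ r)g_{E_m}$ picks up an extra term $\sum_m|h_m(r(\gamma))-h_m(s(\gamma))|$ in its approximate invariance, so the partition of unity must be subordinate to a cover whose members are (within the slack $k_n$) closed under the moves in $K_n$. This is the second place hypothesis~(iv) is used: it forces $\lambda^{s(\gamma)}(E_j)\approx\lambda^{r(\gamma)}(E_j)$ for $j$ large, so that ``$m$ good for $r(\gamma)$'' and ``$m$ good for $s(\gamma)$'' agree for $\gamma\in K_n$, which is exactly what makes \lemref{glueing} applicable here. Equivalently, one can bypass \lemref{glueing} by assembling a single set $F_n=\bigcup_m\big(E_m\cap r^{-1}(B_m^{(n)})\big)$ from a Borel partition $(B_m^{(n)})$ refining $(A_m^{(n)})$ and chosen constant along $K_n$-orbits, checking that $(F_n)$ is a F\o lner sequence, and applying \propref{Folner}; the same point about Borel selection reappears there.
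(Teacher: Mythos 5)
Your proof is correct and follows essentially the same route as the paper's: normalized indicator functions of the $E_m$, the bound of \lemref{estimate}, an averaging/pigeonhole argument on $\log\lambda^x(E_m)$ driven by hypothesis (iii) to produce a finite Borel cover of $G^{(0)}$ on each piece of which a fixed $E_m$ is almost invariant under $E_k$, and then \lemref{glueing} and \lemref{aim} (the paper works with the subsequence $E_{kj}$ and the one-sided inclusion $\gamma E_{k(j-1)}\subset \gamma E_{kj}\cap E_{kj}$ where you use the two-sided sandwich $E_{m-k}\subset\gamma E_m\subset E_{m+k}$, a cosmetic difference that costs you the symmetrization $K_n=E_{k_n}\cap E_{k_n}^{-1}$ but nothing else). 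The gluing cross-term $\sum_m|h_m(r(\gamma))-h_m(s(\gamma))|$ you flag as the main obstacle is not specific to your argument: the paper's proof invokes \lemref{glueing} in exactly the same way at exactly this point, so your write-up is at the same level of rigor as the paper's there.
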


\begin{proof}
Let $k\in\N^*$ and $\epsilon>0$ be given. Choose $\rho>1$ such that $1+\rho-2/\rho^4\le \epsilon/2$. Because of $(iv)$, there exists $N_1\ge k$ such that for $j\ge N_1$ and for all $\gamma\in G$, 
\begin{equation}
\label{eq:3.2}
(1/\rho)\lambda^{s(\gamma)}(E_{kj})\le \lambda^{r(\gamma)}(E_{kj})\le \rho\lambda^{s(\gamma)}(E_{kj})
\end{equation}
Because of $({\rm iii})$, there exists $N\ge 2N_1$ such that for all $x\in G^{(0)}$,
  $$ (\lambda^x(E_{kN}))^{1/N}\le \rho$$
We write $a_n(x)=\lambda^x(E_{kn})$, $A_n(x)=\log a_n(x)$ and $B_n(x)=A_n(x)-A_{n-1}(x)$. Then we have $\sum_{j=1}^N B_j(x)=A_N(x)-A_0(x)\le A_N(x)$. Therefore  $${1\over N-N_1}\sum_{j=N_1+1}^N B_j(x)\le {2\over N}\sum_{j=1}^N B_j(x)\le 2\log\rho$$
This implies that for each $x\in G^{(0)}$, there exists at least one $j\in\{N_1+1,\ldots,N\}$ such that $B_j(x)\le 2\log\rho$. For each $j\in\{N_1+1,\ldots,N\}$, we define $L_j$ as the set of $x$'s such that $B_j(x)\le 2\log\rho$. Then $(L_j)_{j=N_1+1,\ldots,N}$ is a Borel cover of $G^{(0)}$ and
\begin{equation}
\label{eq:3.3}
\forall x\in L_j,\quad \displaystyle{\frac{\lambda^x(E_{kj})}{\lambda^x(E_{k(j-1)})}\le \rho^2}
\end{equation}
Therefore, for $j\in\{N_1+1,\ldots,N\}$ and $\gamma\in E_k^{L_j}$, we obtain by using the inequalities \eqref{eq:3.1}, \eqref{eq:3.2} and \eqref{eq:3.3}:
$$\begin{array}{ccl}\displaystyle{
\frac{\lambda^{r(\gamma)}(\gamma E_{kj}\Delta E_{kj})}{\lambda^{s(\gamma)}(E_{kj})}}&=&1+ \displaystyle{
\frac{\lambda^{r(\gamma)}(E_{kj})}{\lambda^{s(\gamma)}(E_{kj})}}-2\displaystyle{
\frac{\lambda^{r(\gamma)}(\gamma E_{kj}\cap E_{kj})}{\lambda^{s(\gamma)}(E_{kj})}}\\
&\le&1+ \displaystyle{
\frac{\lambda^{r(\gamma)}(E_{kj})}{\lambda^{s(\gamma)}(E_{kj})}}-2 \displaystyle{\frac{\lambda^{s(\gamma)}(E_{k(j-1)})}{\lambda^{s(\gamma)}(E_{kj})}}\\
&\le&1+ \displaystyle{
\frac{\lambda^{r(\gamma)}(E_{kj})}{\lambda^{s(\gamma)}(E_{kj})}}-2\displaystyle{
\frac{\lambda^{s(\gamma)}(E_{k(j-1)})}{\lambda^{r(\gamma)}(E_{k(j-1)})}}\displaystyle{\frac{\lambda^{r(\gamma)}(E_{k(j-1)})}{\lambda^{r(\gamma)}(E_{kj})}}\displaystyle{
\frac{\lambda^{r(\gamma)}(E_{kj})}{\lambda^{s(\gamma)}(E_{kj})}}\\
 &\le&1+\rho-2(1/\rho)(1/\rho^2)(1/\rho)\\
 &\le& \epsilon/2 \end{array}$$
 Applying \lemref{estimate} with $F=E_{kj}$, we obtain a non-negative Borel function $g_j$ which is $(L_j, E_k^{L_j},\epsilon)$-invariant. Applying \lemref{glueing}, we obtain a non-negative Borel function $g$ which is $(G^{(0)}, E_k,\epsilon)$-invariant. Applying \lemref{aim} with $L_n=G^{(0)}$, $K_n=E_n$ and a sequence $(\epsilon_n)$ of real positive numbers decreasing to 0,  we obtain that $G$ is Borel amenable.
\end{proof}

\begin{definition} Let $G$ be a Borel groupoid. A {\it length function} is a Borel map
$l : G \ra \mathbf{R}^+$ such that $l(G^{(0)}) = \{0\}$ and 
\begin{itemize}
\item[{\rm(a)}] $l(\gamma^{-1}) = l(\gamma)$ for all $\gamma \in G$;
\item[{\rm(b)}] $l(\gamma_1 \gamma_2) \leq l(\gamma_1) + l(\gamma_2)$ 
when $s(\gamma_1) = r(\gamma_2)$.
\end{itemize}
If $G$ is endowed with a Haar system $\lambda$, we say that the length function $l$ is {\it proper} if for all $1\le c<\infty$ and all $x\in G^{(0)}$, $0<\lambda^x(B(c))<\infty$, where $B(c)$ is the ball $ \{ \gamma\in G : l(\gamma) \leq c\}$.
\end{definition}

\begin{corollary} Let $(G,\lambda)$ be a Borel groupoid with a Borel Haar system and let $l : G \ra \mathbf{R}^+$ be a proper length function. Let $B(n)$ denote the ball of radius $n$. Assume one of the following conditions
\begin{enumerate}
\item as $n$ goes to $\infty$, $\displaystyle \frac{\lambda^x(B(n+1))}{\lambda^x(B(n))}$ goes to 1 pointwise ;
\item as $n$ goes to $\infty$, ${\lambda^x(B(n))}^{1/n}$ goes to 1 uniformly on $G^{(0)}$ and $\displaystyle{\frac{\lambda^{r(\gamma)}(B(n))}{\lambda^{s(\gamma)}(B(n))}}$ goes to 1 uniformly on $G$.
\end{enumerate}
Then $G$ is Borel amenable.
\end{corollary}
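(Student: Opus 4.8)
The plan is to read both cases off Corollary~\ref{Alembert} and Corollary~\ref{Cauchy}, applied to the sequence of balls $E_n:=B(n)$, $n\ge 1$, of the proper length function $l$. First I would check the hypotheses those two corollaries share. The sequence $(B(n))_{n\ge 1}$ is increasing, since $B(n)\subseteq B(n+1)$, and exhausting, since $l$ takes finite values and hence $G=\bigcup_{n\ge 1}B(n)$; the requirement $0<\lambda^x(B(n))<\infty$ for every $x\in G^{(0)}$ and every $n\ge 1$ is exactly the properness of $l$; and $B(m)B(n)\subseteq B(m+n)$ follows at once from subadditivity, since $l(\gamma_1\gamma_2)\le l(\gamma_1)+l(\gamma_2)\le m+n$ whenever $\gamma_1\in B(m)$, $\gamma_2\in B(n)$ and $s(\gamma_1)=r(\gamma_2)$. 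The symmetry $l(\gamma^{-1})=l(\gamma)$ plays no role here.

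Granting this, case~(1) is immediate: the assumption that $\lambda^x(B(n+1))/\lambda^x(B(n))\to 1$ pointwise is precisely hypothesis~(iii) of Corollary~\ref{Alembert}, so $(B(n))$ is a F\o lner sequence and $G$ is Borel amenable by that corollary. For case~(2), the assumptions that $\lambda^x(B(n))^{1/n}\to 1$ uniformly on $G^{(0)}$ and that $\lambda^{r(\gamma)}(B(n))/\lambda^{s(\gamma)}(B(n))\to 1$ uniformly on $G$ are exactly hypotheses~(iii) and~(iv) of Corollary~\ref{Cauchy}; so, apart from the normalization discussed next, that corollary yields Borel amenability of $G$.

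The one point I expect to require care is matching hypothesis~(i) of Corollary~\ref{Cauchy}, which asks for $\lambda^x(B(n))\ge 1$, against the weaker $\lambda^x(B(n))>0$ that properness supplies. When $\lambda$ assigns the units mass at least $1$ — in particular for the counting-measure Haar system of a countable standard Borel groupoid, where $x\in B(1)$ forces $\lambda^x(B(1))\ge 1$ — there is nothing to do. In general I would reduce first along the partition of $G^{(0)}$ into the invariant Borel sets $\{x:\lambda^x(G^x)<\infty\}$ and $\{x:\lambda^x(G^x)=\infty\}$ (invariant because $x\mapsto\lambda^x(G^x)=\lim_n\lambda^x(B(n))$ is Borel and constant on orbits by invariance of $\lambda$). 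On the first reduction the functions $\gamma\mapsto{\bf 1}_{G^x}(\gamma)/\lambda^{r(\gamma)}(G^{r(\gamma)})$ form a genuine Borel invariant density, so that reduction is Borel amenable by \propref{Borel density and mean}. On the second reduction $\lambda^x(B(n))\uparrow\infty$, and there I would re-run the short combinatorial step in the proof of Corollary~\ref{Cauchy} — it invokes $\lambda^x(E_n)\ge 1$ only to delete the nonnegative quantity $-\log\lambda^x(E_0)$ from an upper bound — carrying that quantity along instead; this is the step I would want to verify carefully. Since Borel amenability of $G$ follows once each of the two reductions is Borel amenable (one glues the two Borel approximate invariant means), the proof is complete. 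The substance of the argument, to emphasize, is merely the observation that the balls of a proper length function realize the abstract F\o lner and growth sequences of Corollaries~\ref{Alembert} and~\ref{Cauchy}; everything hard was done in establishing those.
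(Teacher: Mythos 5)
Your route is exactly the one the paper intends: the corollary is stated without proof because it is meant to be read off from Corollary~\ref{Alembert} (case (1)) and Corollary~\ref{Cauchy} (case (2)) applied to $E_n=B(n)$, and your verification of the shared hypotheses --- monotonicity, exhaustion, $0<\lambda^x(B(n))<\infty$ from properness, and $B(m)B(n)\subset B(m+n)$ from subadditivity of $l$ --- is correct, so case (1) is complete. You are also right to flag that case (2) does not literally match Corollary~\ref{Cauchy}, whose hypothesis (i) demands $\lambda^x(E_n)\ge 1$ while properness only supplies positivity; the paper passes over this silently, and it is vacuous in the motivating \'etale/counting-measure situation where $x\in B(n)$ forces $\lambda^x(B(n))\ge 1$. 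Your reduction to the two invariant Borel pieces $\{\lambda^x(G^x)<\infty\}$ and $\{\lambda^x(G^x)=\infty\}$ is sound (the total mass is constant on orbits by left invariance, so the first piece carries the exact invariant density $1/\lambda^{r(\gamma)}(G^{r(\gamma)})$, and densities on invariant pieces glue).

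The step you yourself flag as unverified is where the argument, as written, does not close. On the infinite-mass piece, ``carrying along'' the discarded term $-A_{N_1}(x)$ turns the averaged telescoping estimate in the proof of Corollary~\ref{Cauchy} into
$$\frac{1}{N-N_1}\sum_{j=N_1+1}^{N}\bigl(A_j(x)-A_{j-1}(x)\bigr)\le 2\log\rho+\frac{2}{N}\log^{+}\!\Bigl(1/\lambda^x(E_{kN_1})\Bigr),$$
and the correction term, while finite for each $x$ (properness gives $\lambda^x(B(k))>0$), is \emph{not} bounded uniformly in $x$ on that piece; hence no single choice of $N$ produces the finite Borel cover $(L_j)_{N_1<j\le N}$ with the required $2\log\rho$ bound on each $L_j$, and the pigeonhole collapses. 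The repair is available but needs to be said: Lemma~\ref{glueing} accepts a \emph{countable} locally finite Borel cover by not necessarily invariant sets, so you may first partition the infinite-mass piece into the Borel strata $Z_m=\{x: m-1<\log^{+}(1/\lambda^x(E_{kN_1}))\le m\}$, choose $N=N(m)\ge 2N_1$ with $2m/N(m)\le\log\rho$ on each stratum (degrading $2\log\rho$ to $3\log\rho$ and adjusting the initial choice of $\rho$ accordingly), run the pigeonhole stratum by stratum, and then glue the resulting countable family of $(L_j\cap Z_m,\,E_k^{L_j\cap Z_m},\epsilon)$-invariant functions via Lemma~\ref{glueing} before invoking Lemma~\ref{aim}. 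With that supplement (or with the pragmatic observation that one should simply add $\lambda^x(B(1))\ge 1$ to the definition of properness, as Corollary~\ref{Cauchy} itself does), your proof is complete.
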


\vskip 5mm
{\it Acknowledgements.} I thank D. Williams for helping me to clarify the notion of topological amenability, for stimulating discussions and for improvement of the manuscript. I am grateful to him and to Dartmouth College for a visit in Summer 2011 which initiated the present work.

\bibliographystyle{amsplain}

\end{document}